\documentclass[a4paper,12pt]{article}
\usepackage{amsmath,amsthm,amssymb}
\usepackage{amsmath}
\usepackage{amsfonts}

\textheight=43\baselineskip
\advance \textheight by \topskip
\advance \textheight by 1pt
\textwidth=445pt
\evensidemargin=0in
\oddsidemargin=0in
\topmargin=0in
\headheight=10pt
\headsep=0pt
\makeatother

\numberwithin{equation}{section}

\newtheorem{theorem}{Theorem}[section]
\newtheorem{prop}[theorem]{Proposition}
\newtheorem{lem}[theorem]{Lemma}

{\theoremstyle{definition}
\newtheorem{definition}[theorem]{Definition}
\newtheorem{rem}[theorem]{Remark}
}

\def\wt{{\rm wt}\,}

\def\b{\begin{equation}}
\def\e{\end{equation}}

\title{Sigma function associated with a hyperelliptic curve with two points at infinity}
\author{Takanori Ayano\footnote{Osaka Central Advanced Mathematical Institute, Osaka Metropolitan University, Osaka, Japan. \newline \hspace{3ex} Email: ayano@omu.ac.jp} \hspace{1ex} and \hspace{1ex} Victor M. Buchstaber\footnote{Steklov Mathematical Institute of Russian Academy of Sciences, Moscow, Russia. \newline \hspace{3ex} Email: buchstab@mi-ras.ru
\newline \hspace{3ex} Key words: hyperelliptic curve with two points at infinity, sigma function, hyperelliptic function. 
\newline \hspace{3ex} MSC classes: 14H42, 14K25, 14H70, 14H81.}}

\date{}

\begin{document}
\maketitle

\begin{flushright}
\textit{Dedicated to the memory of Alexey Vladimirovich Borisov.}
\end{flushright}

\begin{abstract}
Baker constructed basic meromorphic functions on the Jacobian variety of a hyperelliptic curve with two points at infinity. 
We call them Baker functions. 
The construction is based on the Abel-Jacobi map, which allows us to identify the field of meromorphic functions on the Jacobian variety of the curve with the field of meromorphic functions on the symmetric product of the curve. 
In our previous paper, a solution to the KP equation was constructed in terms of the Baker function. 
This paper is devoted to the properties of the Baker functions. 
In this paper, we construct an entire function whose second logarithmic derivatives are the Baker functions. 
We prove that the power series expansion of the entire function around the origin is determined only by the coefficients of the defining equation of the curve and a branch point of the curve algebraically. 
We also describe the quasi-periodicity of the entire function and express the entire function in terms of the Riemann theta function. 
\end{abstract}

\section{Introduction}

In \cite{Kl1} and \cite{Kl2}, Klein generalized the Weierstrass elliptic sigma function to the multidimensional sigma functions associated with hyperelliptic curves.   
On this problem, Klein published 3 works (1886--1890). 
Pay attention to the papers \cite{B-1898} and \cite{B} by Baker. 
In 1923, a 3-volume collection of Klein's scientific works was published. 
There is no doubt that Klein knew Baker's results. 
However, in this collection Klein emphasized that the theory of hyperelliptic sigma functions was still far from complete. 
Klein and Baker did not discuss the equations of mathematical physics. 
The development of the theory of multidimensional sigma functions in the direction of applications to problems of mathematical physics began with the works of Buchstaber, Enolski, and Leykin (cf. \cite{BEL-97-1}, \cite{BEL-97-2}, \cite{BEL-2012}, \cite{BEL-2018}).  
Over the past 30 years, a number of authors have successfully joined in the development of the classical results of Klein and Baker with applications in mathematical physics.

Throughout the present paper, we denote by $\mathbb{Z}_{\ge0}$, $\mathbb{Z}$, $\mathbb{Q}$, and $\mathbb{C}$ the sets of non-negative integers, integers, rational numbers, and complex numbers, respectively.

For a positive integer $g$, let us consider the polynomial in $X$ 
\[
M(X)=X^{2g+1}+\lambda_2X^{2g}+\lambda_4X^{2g-1}+\cdots+\lambda_{4g}X+\lambda_{4g+2}, \qquad \lambda_i\in\mathbb{C}.
\]
We assume that $M(X)$ has no multiple roots and consider the non-singular hyperelliptic curve of genus $g$
\[C=\Bigl\{(X,Y)\in\mathbb{C}^2 \Bigm|Y^2=M(X)\Bigr\}.\]
We assign weights for $X$, $Y$, and $\lambda_i$ as $\wt(X)=2$, $\wt(Y)=2g+1$, and $\wt(\lambda_i)=i$.  
The equation $Y^2=M(X)$ has the homogeneous weight $4g+2$ with respect to the coefficients $\lambda_i$ and the variables $X,Y$.  
Let $\sigma(u)$ with $u={}^t(u_1,u_3,\dots,u_{2g-1})\in\mathbb{C}^g$ be the sigma function associated with $C$, which is a holomorphic function on $\mathbb{C}^g$ (cf. \cite{BEL-97-1}, \cite{BEL-97-2}, \cite{BEL-2012}). 
The coefficients of the power series expansion of this sigma function are polynomials in $\{\lambda_{2i}\}_{i=1}^{2g+1}$.  
Let $\wp_{i,j}=-\partial_{u_j}\partial_{u_i}\log\sigma$, where $\partial_{u_k}=\partial/\partial u_k$. 
The functions $\wp_{i,j}$ are meromorphic functions on the Jacobian variety of $C$.  
We assign weights for $u_i$ and $\wp_{i,j}$ as $\wt(u_i)=-i$ and $\wt(\wp_{i,j})=i+j$. 
In \cite{BEL-97-1}, a solution to the KdV equation was constructed in terms of $\wp_{1,1}$. 
In \cite{AB}, a solution to the KP equation was constructed in terms of $\wp_{1,1}$. 
In \cite{AB} and \cite{BEL-2000}, a solution to the KP equation was constructed in terms of $\wp_{2g-1,2g-1}$. 
We changed the suffixes of $\wp_{i,j}$ in \cite{BEL-97-1} to use the grading. 
The suffix $g$ in \cite{BEL-97-1} is replaced with $1$ and the suffix $1$ in \cite{BEL-97-1} is replaced with $2g-1$. 
 
For a positive integer $g$, let us consider the polynomial in $x$ 
\[
N(x)=\nu_{0}x^{2g+2}+\nu_{2}x^{2g+1}+\cdots+\nu_{4g+2}x+\nu_{4g+4}, \quad \nu_i\in\mathbb{C},\quad\nu_{0}\neq0.
\]
We assume that $N(x)$ has no multiple roots and consider the non-singular hyperelliptic curve of genus $g$
\[V=\Bigl\{(x,y)\in\mathbb{C}^2 \Bigm| y^2=N(x)\Bigr\}.\]
We assign weights for $x$, $y$, and $\nu_i$ as $\wt(x)=2$, $\wt(y)=2g+2$, and $\wt(\nu_i)=i$.  
The equation $y^2=N(x)$ has the homogeneous weight $4g+4$ with respect to the coefficients $\nu_i$ and the variables $x,y$. 
We take $a\in\mathbb{C}$ such that $N(a)=0$. 
In \cite{B}, Baker introduced basic meromorphic functions $\mathcal{P}_{i,j}(v)$ with $v={}^t(v_{2g},v_{2g-2},\dots,v_2)\in\mathbb{C}^g$ and $i,j=2,4,\dots,2g$ on the Jacobian variety of $V$. 
We assign weights for $v_i$ and $\mathcal{P}_{i,j}$ as $\wt(v_i)=-i$ and $\wt(\mathcal{P}_{i,j})=i+j$.  
The functions $\mathcal{P}_{i,j}$ are determined by $\{\nu_{2i}\}_{i=0}^{2g+2}$ and $a$. 
In \cite{B}, Baker used the Abel-Jacobi map and did not introduce a sigma function to define the functions $\mathcal{P}_{i,j}$. 
In \cite{B}, Baker derived a fundamental formula on differential relations between the functions $\mathcal{P}_{i,j}$. 
Further, in \cite{B}, the differential relations between the functions $\mathcal{P}_{i,j}$ were described explicitly for $g=1,2,3$. 
In \cite{M}, in the case $g=3$, it was proved that the function $\mathcal{P}_{2,2}$ satisfies the KP equation. 
In \cite{AB}, we described the differential relations between the functions $\mathcal{P}_{i,j}$ explicitly for any $g\ge1$ and 
proved that the function $\mathcal{P}_{2,2}$ satisfies the KP equation for any $g\ge3$. 
The new results of our paper are as follows. 

\begin{itemize}

\item We describe the relations between $\mathcal{P}_{i,j}$ and $\wp_{k,l}$ explicitly in Proposition \ref{2025.7.24.2}, which is a refinement of \cite[Proposition 7.3 (i)]{AB}.

\item We construct an entire function $H(v)$ such that $\partial_{v_i}\partial_{v_j}\log H(v)=-\mathcal{P}_{i,j}(v)$ for $i,j=2,4,\dots,2g$ in Definition \ref{2025.8.23.8654098} and Theorem \ref{2025.8.23.3}.

\item We prove that the power series expansion of $H(v)$ around the origin is determined only by $a$ and $\{\nu_{2i}\}_{i=0}^{2g+2}$ algebraically in Theorem \ref{2025.8.23.2}.

\item We express the quasi-periodicity of $H(v)$ in Proposition \ref{2025.8.23.4}. 

\item We express $H(v)$ in terms of the Riemann theta function in Proposition \ref{2025.8.23.5}.  
\end{itemize}

The formulas in our paper are consistent with the grading.
Grading is the fundamental difference between the sigma function and the theta function.
The theta function does not allow grading since its arguments are normalized.

In \cite{A-E-E-2003}, \cite{A-E-E-2004}, and \cite{B-1907}, the identities for hyperelliptic functions of genus 2 which are different from the hyperelliptic functions considered in our paper were studied. 

The sigma functions associated with the $(n,s)$ curves were considered in \cite{BEL-99-R}, \cite{BEL-99-2}, \cite{EEL}, and \cite{N1}.  
The sigma functions associated with the telescopic curves were considered in \cite{Aya1} and \cite{Aya2}. 
For the $(n,s)$ curves and the telescopic curves, the coefficients of the power series expansion of the sigma function are polynomials in the coefficients of the defining equations of the curve. 
The sigma functions associated with the Weierstrass curves were considered in \cite{Komeda-Matsutani-Previato}. 
The $(n,s)$ curves, the telescopic curves, and the Weierstrass curves include hyperelliptic curves with one point at infinity. 
On the other hand, they do not include hyperelliptic curves with two points at infinity. 
In \cite{Korotokin} and \cite{Nakayashiki}, the sigma functions associated with general compact Riemann surfaces were defined. 
They are modular invariant, i.e., they do not depend on the choice of a canonical homology basis. 
The function $H(v)$ considered in our paper is apparently different from the sigma functions defined in \cite{Korotokin} and \cite{Nakayashiki}. 
In the case of general compact Riemann surfaces, we are not talking about a model of a curve in the form of specific algebraic equations. 
The following problem arises. 
Suppose that an algebraic curve is given by an algebraic equation. 
Find a sigma function whose power series expansion uses the coefficients of this algebraic equation. 
In our paper, we solve this problem in the case of hyperelliptic curves with two points at infinity. 

The present paper is organized as follows. 
In Section 2, we review the definition and properties of the sigma function associated with a hyperelliptic curve with one point at infinity. 
In Section 3, we review the definition of the hyperelliptic functions associated with a hyperelliptic curve with two points at infinity. 
In Section 4, we construct the sigma function associated with the hyperelliptic curve with two points at infinity and study its properties.

\section{Sigma function associated with a hyperelliptic curve with one point at infinity}\label{2024.10.21.1}

In this section, we review the definition of the sigma function associated with a hyperelliptic curve with one point at infinity and give facts about it which will be used later on.  
For details, see \cite{BEL-97-1}, \cite{BEL-97-2}, and \cite{BEL-2012}.  

For a positive integer $g$, let us consider the polynomial in $X$ 
\[
M(X)=X^{2g+1}+\lambda_2X^{2g}+\lambda_4X^{2g-1}+\cdots+\lambda_{4g}X+\lambda_{4g+2}, \qquad \lambda_i\in\mathbb{C}.
\]
We assume that $M(X)$ has no multiple roots and consider the non-singular hyperelliptic curve of genus $g$
\[C=\Bigl\{(X,Y)\in\mathbb{C}^2 \Bigm|Y^2=M(X)\Bigr\}.\]
We assign weights for $X$, $Y$, and $\lambda_i$ as $\wt(X)=2$, $\wt(Y)=2g+1$, and $\wt(\lambda_i)=i$.  
The equation $Y^2=M(X)$ has the homogeneous weight $4g+2$ with respect to the coefficients $\{\lambda_{2i}\}_{i=1}^{2g+1}$ and the variables $X,Y$.  
A basis of the vector space consisting of holomorphic 1-forms on $C$ is given by
\[
\omega_i=-\frac{X^{g-i}}{2Y}dX, \qquad 1\le i\le g. 
\]
We set $\omega={}^t(\omega_1,\dots,\omega_g)$. 
Let us consider the following meromorphic 1-forms of the second kind on $C$:  
\begin{equation}
\eta_i=-\frac{1}{2Y}\sum_{k=g-i+1}^{g+i-1}(k+i-g)\lambda_{2g+2i-2k-2}X^kdX, \qquad 1\le i\le g,\label{2024.10.21.345}
\end{equation}
which are holomorphic at any point except $\infty$. 
In (\ref{2024.10.21.345}), we set $\lambda_0=1$. 
For example, for $g=1$, we have 
\[\eta_1=-\frac{X}{2Y}dX.\]
Let $\{A_i, B_i\}_{i=1}^g$ be a canonical basis in the one-dimensional homology group of the curve $C$. 
We define the period matrices by 
\[
2\omega'=\left(\int_{A_j}\omega_i\right),\quad
2\omega''=\left(\int_{B_j}\omega_i\right),\quad 
-2\eta'=\left(\int_{A_j}\eta_i\right),
\quad
-2\eta''=\left(\int_{B_j}\eta_i\right).
\]
We define the lattice of periods $\Lambda=\bigl\{2\omega'm_1+2\omega''m_2\mid m_1,m_2\in\mathbb{Z}^g\bigr\}$ and consider the Jacobian variety $\operatorname{Jac}(C)=\mathbb{C}^g/\Lambda$. 
The normalized period matrix is given by $\tau=(\omega')^{-1}\omega''$. 
Let $\tau\delta'+\delta''$ with $\delta',\delta''\in\mathbb{R}^g$ be the Riemann constant with respect to $\bigl(\{A_i, B_i\}_{i=1}^g,\infty\bigr)$. We denote the imaginary unit by $\textbf{i}$.
The sigma function $\sigma(u)$ associated with the curve $C$, $u={}^t(u_1, u_3, \dots, u_{2g-1})\in\mathbb{C}^g$, is defined by
\begin{equation}
\sigma(u)=\varepsilon\exp\biggl(\frac{1}{2}{}^tu\eta'(\omega')^{-1}u\biggr)\theta\begin{bmatrix}\delta'\\ \delta'' \end{bmatrix}\bigl((2\omega')^{-1}u,\tau\bigr),\label{2023.8.23.1357}
\end{equation}
where $\theta\begin{bmatrix}\delta'\\ \delta'' \end{bmatrix}(u,\tau)$ is the Riemann theta function with the characteristics $\begin{bmatrix}\delta'\\ \delta'' \end{bmatrix}$ defined by
\[
\theta\begin{bmatrix}\delta'\\ \delta'' \end{bmatrix}(u,\tau)=\sum_{n\in\mathbb{Z}^g}\exp\bigl\{\pi\textbf{i}\,{}^t(n+\delta')\tau(n+\delta')+2\pi\textbf{i}\,{}^t(n+\delta')(u+\delta'')\bigr\}
\]
and $\varepsilon$ is a non-zero constant. 
The characteristics of this sigma function correspond to the vector of the Riemann constant. 
Let $K=\begin{pmatrix}\omega'&\omega''\\\eta'&\eta''\end{pmatrix}$. 

\begin{prop}[{\cite[Lemma 1.1]{BEL-97-1}}, {\cite[p.~191]{N1}}]\label{2025.8.14.1}
We have 
\[{}^tK\begin{pmatrix}O&E_g\\-E_g&O\end{pmatrix}K=-\frac{\pi\textbf{i}}{2}\begin{pmatrix}O&E_g\\-E_g&O\end{pmatrix},\]
where $E_g$ is the identity matrix of size $g$ and $O$ is the $g\times g$ zero matrix. 
\end{prop}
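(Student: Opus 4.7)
The statement is the generalized Legendre relation for the period matrix $K$ of the hyperelliptic curve $C$. The plan is to derive it from the Riemann bilinear relations on $C$, applied to every pair in the basis $\{\omega_1,\ldots,\omega_g,\eta_1,\ldots,\eta_g\}$ of $H^1_{\rm dR}(C,\mathbb{C})$.

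First, I would recall the Riemann bilinear relation: for closed 1-forms $\alpha,\beta$ on $C$ that are at worst of the second kind,
\[
\sum_{k=1}^g\left(\int_{A_k}\alpha\int_{B_k}\beta-\int_{B_k}\alpha\int_{A_k}\beta\right)=2\pi\textbf{i}\sum_p\mathrm{Res}_p(f_\alpha\beta),
\]
where $f_\alpha$ denotes a locally-defined primitive of $\alpha$ near each pole of $\beta$. All $\omega_i$ are holomorphic and all $\eta_j$ have their unique pole at $\infty$, so the residue sum on the right reduces to a single contribution at $\infty$. Using the local parameter $t=Y/X^{g+1}$ there, the leading Laurent expansions are $\omega_i=t^{2i-2}dt+\cdots$ and $\eta_j=(2j-1)t^{-2j}dt+\cdots$; combined with the explicit form \eqref{2024.10.21.345} of the $\eta_j$, a direct residue computation yields
\[
\mathrm{Res}_\infty(f_{\omega_i}\omega_j)=0,\quad \mathrm{Res}_\infty(f_{\omega_i}\eta_j)=\delta_{ij},\quad \mathrm{Res}_\infty(f_{\eta_i}\eta_j)=0.
\]

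Plugging these into the bilinear relation for each type of pair $(\omega_i,\omega_j)$, $(\omega_i,\eta_j)$, $(\eta_i,\eta_j)$ produces three matrix equations in the blocks $\omega',\omega'',\eta',\eta''$, which rearrange into the single identity $KJ\,{}^tK=-\tfrac{\pi\textbf{i}}{2}J$. To pass from this to the stated form ${}^tKJK=-\tfrac{\pi\textbf{i}}{2}J$, I would use that $K$ is invertible (the holomorphic period matrix $\omega'$ is) and $J^2=-E_{2g}$: from $KJ\,{}^tK=cJ$ with $c=-\pi\textbf{i}/2$, left-multiplying by $K^{-1}$ and then by $J^{-1}=-J$ gives ${}^tK=-cJK^{-1}J$, and substituting into ${}^tKJK$ yields ${}^tKJK=cJ$, the desired identity.

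The main obstacle is the residue identity $\mathrm{Res}_\infty(f_{\eta_i}\eta_j)=0$, which encodes the ``normalized'' character of the second-kind basis in \eqref{2024.10.21.345}. Establishing it requires tracking several subleading terms in the Laurent expansions of $X$, $Y$, and the $\eta_j$ at $\infty$, since the vanishing emerges only from nontrivial cancellations between contributions of different orders; the remaining two residue identities are essentially leading-order computations once the local parameter has been fixed.
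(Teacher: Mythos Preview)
The paper does not prove this proposition at all; it is quoted with citations to \cite[Lemma~1.1]{BEL-97-1} and \cite[p.~191]{N1} and used as background input, so there is no ``paper's own proof'' to compare against. Your outline is the standard argument and is correct: the Riemann bilinear relations applied to the pairs $(\omega_i,\omega_j)$, $(\omega_i,\eta_j)$, $(\eta_i,\eta_j)$ yield the three block identities, and these assemble into $KJ\,{}^tK=-\tfrac{\pi\textbf{i}}{2}J$, from which ${}^tKJK=-\tfrac{\pi\textbf{i}}{2}J$ follows by the formal manipulation you describe. One small correction: invertibility of the $2g\times 2g$ matrix $K$ does not follow from invertibility of the $g\times g$ block $\omega'$ alone, but it is immediate from $KJ\,{}^tK=cJ$ with $c\neq 0$ by taking determinants, so the argument goes through unchanged.
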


\begin{prop}[{\cite[pp.~7, 8]{BEL-97-1}}]\label{2025.2.23.18765432042224455}
For $m_1,m_2\in\mathbb{Z}^g$ and $u\in\mathbb{C}^g$, we have
\begin{align*}
&\sigma(u+2\omega'm_1+2\omega''m_2)/\sigma(u) \\
&=(-1)^{2({}^t\delta'm_1-{}^t\delta''m_2)+{}^tm_1m_2}\exp\bigl\{{}^t(2\eta'm_1+2\eta''m_2)(u+\omega'm_1+\omega''m_2)\bigr\}.
\end{align*}
\end{prop}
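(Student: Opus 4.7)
The plan is to compute the ratio $\sigma(u+v)/\sigma(u)$ with $v=2\omega'm_1+2\omega''m_2$ directly from definition (\ref{2023.8.23.1357}), by separately tracking the transformation of the quadratic exponential prefactor $\exp\bigl(\tfrac12{}^tu\eta'(\omega')^{-1}u\bigr)$ and of the theta function with characteristics, then combining them by means of the Legendre-type identities contained in Proposition \ref{2025.8.14.1}. First I would apply the standard quasi-periodicity of $\theta\begin{bmatrix}\delta'\\ \delta''\end{bmatrix}(\cdot,\tau)$ under translations by $m_1+\tau m_2$; since $\tau=(\omega')^{-1}\omega''$, the substitution $z=(2\omega')^{-1}u$ converts $u\mapsto u+v$ into $z\mapsto z+m_1+\tau m_2$, producing the factor $\exp\bigl(2\pi\textbf{i}({}^t\delta'm_1-{}^t\delta''m_2)-\pi\textbf{i}\,{}^tm_2\tau m_2-2\pi\textbf{i}\,{}^tm_2(2\omega')^{-1}u\bigr)$.

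Next I would extract from Proposition \ref{2025.8.14.1} the block identities ${}^t\omega'\eta'={}^t\eta'\omega'$ and ${}^t\omega''\eta'-{}^t\eta''\omega'=\frac{\pi\textbf{i}}{2}E_g$. The first implies that $A:=\eta'(\omega')^{-1}$ is symmetric and that ${}^t\omega'A={}^t\eta'$, so that $\tfrac12{}^t(u+v)A(u+v)-\tfrac12{}^tuAu={}^tuAv+\tfrac12{}^tvAv$ with no cross-term obstruction. For the linear-in-$u$ piece, I would combine ${}^tuAv$ with the theta's contribution $-2\pi\textbf{i}\,{}^tm_2(2\omega')^{-1}u$; transposing the latter as a scalar and using the equivalent form $A\omega''-\eta''=\frac{\pi\textbf{i}}{2}{}^t(\omega')^{-1}$ of the cross Legendre relation, this piece collapses exactly to $2{}^tu(\eta'm_1+\eta''m_2)$. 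For the $u$-independent piece $\tfrac12{}^tvAv$, I would expand in the four blocks and simplify ${}^t\omega''A\omega''={}^t\eta''\omega''+\frac{\pi\textbf{i}}{2}\tau$ and ${}^t\omega''\eta'={}^t\eta''\omega'+\frac{\pi\textbf{i}}{2}E_g$ by the Legendre identities. The first substitution produces $\pi\textbf{i}\,{}^tm_2\tau m_2$, which cancels against the theta's term of opposite sign, while the second produces $\pi\textbf{i}\,{}^tm_1m_2$, which joins $2\pi\textbf{i}({}^t\delta'm_1-{}^t\delta''m_2)$ to give the sign $(-1)^{2({}^t\delta'm_1-{}^t\delta''m_2)+{}^tm_1m_2}$; what remains is exactly the quadratic part of ${}^t(2\eta'm_1+2\eta''m_2)(\omega'm_1+\omega''m_2)$.

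The main obstacle is the bookkeeping in this last step. The sign $(-1)^{{}^tm_1m_2}$ has no obvious source in either the theta transformation or the naive expansion of the prefactor, and it must be produced by invoking the nonsymmetric Legendre identity on the specific cross term ${}^tm_2{}^t\omega''\eta'm_1$. One has to carefully track symmetric versus antisymmetric parts in the pair ${}^tm_1{}^t\eta'\omega''m_2$ and ${}^tm_2{}^t\omega''\eta'm_1$ using scalar-transpose identities, to avoid double-counting and to route the $\pi\textbf{i}/2$ contributions into their correct places.
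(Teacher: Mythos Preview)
Your proof plan is correct and complete: the combination of the standard quasi-periodicity of $\theta\begin{bmatrix}\delta'\\\delta''\end{bmatrix}$ under $z\mapsto z+m_1+\tau m_2$ with the block Legendre identities from Proposition~\ref{2025.8.14.1} does produce exactly the stated formula, and you have correctly identified where each piece of the answer comes from---in particular that the sign $(-1)^{{}^tm_1m_2}$ arises from applying ${}^t\omega''\eta'={}^t\eta''\omega'+\tfrac{\pi\textbf{i}}{2}E_g$ to the cross term $2\,{}^tm_2{}^t\omega''\eta'm_1$, while the $\pi\textbf{i}\,{}^tm_2\tau m_2$ contributions cancel.

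Note, however, that the paper does not supply a proof of this proposition at all: it is simply quoted from \cite[pp.~7,~8]{BEL-97-1} as a known input. So there is nothing to compare your argument against within the paper itself. What you have written is essentially the standard derivation one finds in the cited source (or reconstructs from the definition and the generalized Legendre relation), and it is sound. The only cosmetic suggestion is that once you have established $A\omega''-\eta''=\tfrac{\pi\textbf{i}}{2}{}^t(\omega')^{-1}$ for the linear-in-$u$ step, you can reuse it directly for the ${}^tm_2{}^t\omega''A\omega''m_2$ block in the quadratic step rather than reproving ${}^t\omega''A\omega''={}^t\eta''\omega''+\tfrac{\pi\textbf{i}}{2}\tau$ separately; this slightly streamlines the bookkeeping you flagged as the main obstacle.
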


For $n\ge0$, let $p_n(T)$ be the polynomial of $T_1, T_2, \dots$ defined by 
\begin{equation}
\sum_{i=0}^{\infty}\frac{1}{i!}\left(\sum_{j=1}^{\infty}T_jk^j\right)^i=\sum_{n=0}^{\infty}p_n(T)k^n,\label{2022.3.16.1}
\end{equation}
where $k$ is a variable, i.e., $p_n(T)$ is the coefficient of $k^n$ in the left-hand side of (\ref{2022.3.16.1}). 
For example, we have 
\[p_0(T)=1,\;\;\;\;p_1(T)=T_1,\;\;\;\;p_2(T)=T_2+\frac{T_1^2}{2},\;\;\;\;p_3(T)=T_3+T_1T_2+\frac{T_1^3}{6}.\]
For $n<0$, let $p_n(T)=0$. 
Let 
\[S(T)=\det \bigl(p_{g+j+1-2i}(T)\bigr)_{1\le i,j\le g}.\]

\begin{lem}[{\cite[Section 4]{BEL-99-R}}]
The polynomial $S(T)$ is a polynomial in the variables $T_1,T_3,\dots, T_{2g-1}$. 
\end{lem}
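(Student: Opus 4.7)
The plan is to show that $S(T)$ has no dependence on any $T_{2k}$ with $k\ge 1$, after which it only remains to note that $S(T)$ automatically involves no variable $T_j$ with $j>2g-1$, since the largest subscript of a $p_n$ appearing in the defining matrix is $g+g+1-2\cdot 1=2g-1$. The route to $\partial S/\partial T_{2k}\equiv 0$ is to combine two ingredients: an explicit formula for $\partial p_n/\partial T_j$, and a row-shift symmetry of the matrix $\bigl(p_{g+j+1-2i}\bigr)_{1\le i,j\le g}$.

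First I would differentiate the generating function (\ref{2022.3.16.1}) with respect to $T_j$; this yields $\sum_n (\partial p_n/\partial T_j)k^n = k^j\sum_m p_m k^m$, hence $\partial p_n/\partial T_j = p_{n-j}$, with the convention $p_n=0$ for $n<0$. Applied to the $(i,j)$ entry of the defining matrix, differentiation by $T_{2k}$ sends $p_{g+j+1-2i}$ to $p_{g+j+1-2(i+k)}$. The right-hand side is precisely the $(i+k,j)$ entry of the original matrix when $i+k\le g$, and is identically zero when $i+k>g$ (the subscript is then at most $-1$). So differentiating row $i$ of the defining matrix with respect to $T_{2k}$ either reproduces row $i+k$ of the same matrix or kills the row entirely.

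By the Leibniz rule for a determinant, $\partial S/\partial T_{2k} = \sum_{i=1}^g \det A^{(i,k)}$, where $A^{(i,k)}$ is obtained from the defining matrix by replacing row $i$ by its $T_{2k}$-derivative. Each summand vanishes: if $i+k\le g$, then $A^{(i,k)}$ has two identical rows (rows $i$ and $i+k$); if $i+k>g$, then row $i$ of $A^{(i,k)}$ is zero. Hence $\partial S/\partial T_{2k}=0$ for every $k\ge 1$, and combined with the index bound above, $S(T)$ is a polynomial in $T_1,T_3,\dots,T_{2g-1}$. The only non-routine step is spotting the row-shift identity $\partial_{T_{2k}}(\text{row }i)=\text{row }i+k$ of the matrix; once that observation is in hand, the rest is index bookkeeping and I do not expect a serious obstacle.
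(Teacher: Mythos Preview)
Your argument is correct. The paper does not prove this lemma at all; it merely cites \cite[Section 4]{BEL-99-R}. So there is no ``paper's own proof'' to compare against. Your proof is valid and self-contained: the identity $\partial p_n/\partial T_j = p_{n-j}$ follows immediately from the exponential form of the generating function, the row-shift observation $\partial_{T_{2k}}(\text{row }i) = \text{row }(i+k)$ (or zero) is exactly right, and the Leibniz expansion of the determinant then kills every summand. The bound on the subscripts showing that no $T_j$ with $j>2g-1$ can appear is also correct, since $p_n$ depends only on $T_1,\dots,T_n$ and the largest subscript in the matrix is $2g-1$.
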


Let $S(u)=S(T)|_{T_i=u_i}$. We assign weights for $u_i$ as $\wt(u_i)=-i$.

\begin{theorem}[{\cite[Theorem 6.3]{BEL-99-R}, \cite[Theorem 3]{N1}}]\label{2023.8.23.1}
The sigma function $\sigma(u)$ is a holomorphic function on $\mathbb{C}^g$ and we have the unique constant $\varepsilon$ in $(\ref{2023.8.23.1357})$ such that the power series expansion of $\sigma(u)$ around the origin has the following form$:$  
\begin{equation}
\sigma(u)=S(u)+\sum_{\sum_{i=1}^g(2i-1)n_i>g(g+1)/2}\gamma_{n_1,\dots,n_g}u_1^{n_1}\cdots u_{2g-1}^{n_g},\label{4.27.1}
\end{equation}
where $\gamma_{n_1,\dots,n_g}\in\mathbb{Q}\bigl[\{\lambda_{2i}\}_{i=1}^{2g+1}\bigr]$ and the right-hand side of $(\ref{4.27.1})$ is homogeneous
of degree $-g(g+1)/2$ with respect to $\{\lambda_{2i}\}_{i=1}^{2g+1}$ and $\{u_{2i-1}\}_{i=1}^g$.
\end{theorem}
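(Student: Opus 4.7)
The plan is to verify in turn the four assertions: holomorphicity on $\mathbb{C}^g$, the shape of the leading Taylor term, the rationality and algebraicity of the higher coefficients, and the homogeneity of the expansion together with the uniqueness of $\varepsilon$.

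Holomorphicity is immediate from the definition (2.2): since the imaginary part of $\tau$ is positive definite, the defining series of $\theta\sbr{\delta'}{\delta''}\bigl((2\omega')^{-1}u,\tau\bigr)$ converges absolutely and uniformly on compact subsets of $\mathbb{C}^g$ and is therefore entire in $u$, while the exponential prefactor $\exp\bigl(\tfrac12{}^tu\eta'(\omega')^{-1}u\bigr)$ is manifestly entire. The core of the theorem is the structure of the Taylor expansion. Because $\sbr{\delta'}{\delta''}$ is the Riemann constant with base point $\infty$, the Riemann vanishing theorem identifies the zero divisor of $\sigma$ with the Abel--Jacobi image of $C^{(g-1)}$, and $\sigma$ vanishes with known orders on the nested Abel--Jacobi images of $C^{(k)}$ for $k<g$. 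I would pull $\sigma$ back along the Abel map $A\colon C^{(g)}\to\mathrm{Jac}(C)$ and work in the local parameter $\xi$ at infinity defined by $X=\xi^{-2}$ and $Y=-\xi^{-(2g+1)}(1+O(\xi))$, for which the Abel integrals expand as
\[
u_{2i-1}=\sum_{k=1}^g\frac{\xi_k^{2i-1}}{2i-1}+(\text{higher order in } \xi_k),\qquad 1\le i\le g.
\]
A Wronskian computation for $\omega_1,\dots,\omega_g$ at $\infty$, combined with these vanishing orders, forces the leading polynomial piece of $\sigma\circ A$ in the $\xi_k$'s to agree with the polynomial obtained by substituting the leading $\xi$-expansion of $u_{2i-1}$ into $S(u)$; that this evaluation reproduces the Wronskian is the Jacobi--Trudi-type identity underlying the definition of $S(T)$. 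Up to the overall scalar $\varepsilon$ this leading piece is uniquely determined, hence there is a unique $\varepsilon$ for which the leading term of $\sigma(u)$ is precisely $S(u)$.

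Once the leading term has been pinned down, every higher Taylor coefficient is determined recursively from the BEL system of heat-type linear PDEs $Q_k\sigma=H_k\sigma$, where $Q_k$ is a second-order operator in $\partial_{u_{2i-1}}$ and $H_k$ is a polynomial, both with coefficients in $\mathbb{Q}[\{\lambda_{2i}\}_{i=1}^{2g+1}]$ (cf.\ \cite{BEL-97-1}, \cite{BEL-99-R}, \cite{BEL-2012}). Substituting the ansatz (2.4) and matching monomials yields a triangular recursion with initial datum $S(u)$, and an induction on total weight places each $\gamma_{n_1,\dots,n_g}$ in $\mathbb{Q}[\{\lambda_{2i}\}_{i=1}^{2g+1}]$. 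The homogeneity statement is then bookkeeping: $Q_k$, $H_k$, and $S(u)$ are all homogeneous with respect to $\wt(u_{2i-1})=-(2i-1)$ and $\wt(\lambda_{2i})=2i$, so the recursion preserves the weight $-g(g+1)/2$ throughout. The principal obstacle is the identification of $S(u)$ as the leading term carried out in the second paragraph: this is the delicate content of \cite[Theorem 6.3]{BEL-99-R} and \cite[Theorem 3]{N1} and requires the Wronskian analysis at infinity together with the combinatorial Jacobi--Trudi-type recognition of $S(u)$. The other three items follow comparatively routinely once this key step is in hand.
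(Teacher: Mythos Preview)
The paper does not give its own proof of this statement: Theorem~\ref{2023.8.23.1} is quoted in Section~\ref{2024.10.21.1} purely as background from \cite[Theorem~6.3]{BEL-99-R} and \cite[Theorem~3]{N1}, with no argument supplied. So there is nothing in the paper to compare your proposal against.

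That said, your sketch is a fair summary of the strategy in the cited sources. The identification of the leading term $S(u)$ via the Abel map, the local parameter at $\infty$, and the Wronskian/Schur-polynomial recognition is exactly the content of \cite[Theorem~3]{N1}; the rationality of the higher coefficients via a recursion driven by linear heat-type operators is the argument of \cite{BEL-99-R}. One caution: you describe the higher coefficients as being produced by the BEL heat-equation system, but in \cite{N1} the polynomiality over $\mathbb{Q}[\{\lambda_{2i}\}]$ is obtained by a somewhat different route (an explicit algebraic expression for $\sigma$ built from the prime form and the bidifferential $\widehat{\omega}$, expanded in the local parameter), and the triangularity of any recursion you invoke needs to be checked rather than asserted. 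If you intend this as a self-contained proof rather than a pointer to the literature, that step---showing the recursion actually determines every $\gamma_{n_1,\dots,n_g}$ uniquely from $S(u)$ and lands in $\mathbb{Q}[\{\lambda_{2i}\}]$---is where the real work lies and is not yet written out.
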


We take the constant $\varepsilon$ in (\ref{2023.8.23.1357}) such that the expansion (\ref{4.27.1}) holds. 
Then the sigma function $\sigma(u)$ does not depend on the choice of a canonical basis $\{A_i, B_i\}_{i=1}^g$ in the one-dimensional homology group of the curve $C$ and is determined only 
by the coefficients $\{\lambda_{2i}\}_{i=1}^{2g+1}$ of the defining equation of the curve $C$. 
Let $\wp_{i,j}=-\partial_{u_j}\partial_{u_i}\log\sigma$, where $\partial_{u_k}=\partial/\partial u_k$. 
We assign weights for $\wp_{i,j}$ as $\wt(\wp_{i,j})=i+j$. 

\begin{rem}
In \cite[Theorem 4.12]{BEL-97-1}, it was proved that the function 
\[\mathcal{G}(t_1,t_3,\dots,t_{2g-1})=2\wp_{1,1}(t_1,t_3,\dots,t_{2g-1})+2\lambda_2/3\]
satisfies the KdV equation
\[4\partial_{t_3}\mathcal{G}+6\mathcal{G}\partial_{t_1}\mathcal{G}-\partial_{t_1}^3\mathcal{G}=0.\]
\end{rem}

\begin{rem}
We consider the case $g\ge2$ and for $g\ge3$ take constants $\varrho_i\in\mathbb{C}$ with $3\le i\le g$. 
Let us consider the function
\[\Upsilon(t_1,t_2,t_3)=-2\wp_{1,1}(t_1+2\sqrt{\lambda_2}t_2, -4t_3, \varrho_3,\dots,\varrho_g).\]
In \cite[Proposition 3.9]{AB}, it was proved that the function $\Upsilon$ satisfies the KP equation
\[\partial_{t_1}(\partial_{t_3}\Upsilon+6\Upsilon\partial_{t_1}\Upsilon+\partial_{t_1}^3\Upsilon)=\partial_{t_2}^2\Upsilon.\]

\end{rem}

\begin{rem}
In \cite[p.~170]{BEL-2000}, it was pointed out that if $g\ge3$, under certain restrictions on the coefficients of the defining equation of the curve, $\wp_{2g-1,2g-1}$ is a solution to the KP equation. 
We consider the case $g\ge3$, assume $\lambda_{4g+2}\neq0$, and for $g\ge4$ take constants $b_i\in\mathbb{C}$ with $1\le i\le g-3$. 
Let 
\[\varphi(t_1,t_2,t_3)=-2\wp_{2g-1,2g-1}\left(b_1,\dots,b_{g-3}, \mathfrak{c}t_3, \mathfrak{d}t_2, t_1+\mathfrak{e}t_2\right)-\mathfrak{f},\]
where 
\[\mathfrak{c}=-16\lambda_{4g+2},\quad \mathfrak{d}=2\sqrt{-3\lambda_{4g+2}},\quad \mathfrak{e}=\frac{\lambda_{4g}}{\sqrt{-3\lambda_{4g+2}}},\quad \mathfrak{f}=\frac{2}{3}\lambda_{4g-2}+\frac{\lambda_{4g}^2}{18\lambda_{4g+2}}.\]
In \cite[Corollary 3.12]{AB}, it was proved that if $\lambda_{4g+2}\neq0$, the function $\varphi$ satisfies the KP equation
\[\partial_{t_1}(\partial_{t_3}\varphi+6\varphi\partial_{t_1}\varphi+\partial_{t_1}^3\varphi)=\partial_{t_2}^2\varphi.\]

\end{rem}

\section{Hyperelliptic functions associated with a hyperelliptic curve with two points at infinity}\label{2024.10.25.1}

In this section, we define basic meromorphic functions on the Jacobian variety of a hyperelliptic curve with two points at infinity in accordance with \cite[p.~145]{B}. 
For details, see \cite[Section 4]{AB}. 

For a positive integer $g$, let us consider the polynomial in $x$ 
\[
N(x)=\nu_{0}x^{2g+2}+\nu_{2}x^{2g+1}+\cdots+\nu_{4g+2}x+\nu_{4g+4}, \quad \nu_i\in\mathbb{C},\quad\nu_{0}\neq0.
\]
We assume that $N(x)$ has no multiple roots and consider the non-singular hyperelliptic curve of genus $g$
\[V=\Bigl\{(x,y)\in\mathbb{C}^2 \Bigm| y^2=N(x)\Bigr\}.\]
We assign weights for $x$, $y$, and $\nu_i$ as $\wt(x)=2$, $\wt(y)=2g+2$, and $\wt(\nu_i)=i$.  
The equation $y^2=N(x)$ has the homogeneous weight $4g+4$ with respect to the coefficients $\{\nu_{2i}\}_{i=0}^{2g+2}$ and the variables $x,y$.
A basis of the vector space consisting of holomorphic 1-forms on $V$ is given by 
\[
\mu_i=\frac{x^{i-1}}{2y}dx, \qquad 1\le i\le g. 
\]
We set $\mu={}^t(\mu_1,\dots,\mu_g)$. 
Let $\{\mathfrak{a}_i,\mathfrak{b}_i\}_{i=1}^g$ be a canonical basis in the one-dimensional homology group of the curve $V$. 
We define the period matrices by 
\[
2\mu'=\left(\int_{\mathfrak{a}_j}\mu_i\right),\qquad
2\mu''=\left(\int_{\mathfrak{b}_j}\mu_i\right). 
\]
We define the lattice of periods $L=\bigl\{2\mu'm_1+2\mu''m_2\mid m_1,m_2\in\mathbb{Z}^g\bigr\}$ and consider the Jacobian variety $\operatorname{Jac}(V)=\mathbb{C}^g/L$. 
We take $a\in\mathbb{C}$ such that $N(a)=0$. 
Let $\operatorname{Sym}^g(V)$ be the $g$-th symmetric product of $V$.   
Let $\mathcal{F}\bigl(\operatorname{Sym}^g(V)\bigr)$ and $\mathcal{F}\bigl(\operatorname{Jac}(V)\bigr)$ be the fields of meromorphic functions on $\operatorname{Sym}^g(V)$ and $\operatorname{Jac}(V)$, respectively. 
Let us consider the Abel-Jacobi map
\[I: \quad\operatorname{Sym}^g(V)\to\operatorname{Jac}(V),\qquad \sum_{i=1}^gQ_i\mapsto\sum_{i=1}^g\int_{(a,0)}^{Q_i}\mu.\]
The map $I$ induces the isomorphism of fields 
\[I^* :\quad \mathcal{F}\bigl(\operatorname{Jac}(V)\bigr)\to\mathcal{F}\bigl(\operatorname{Sym}^g(V)\bigr),\qquad \phi\mapsto \phi\circ I.\]
For $(x_i,y_i)\in V$ with $1\le i\le g$, let 
\[R(x)=(x-a)(x-x_1)\cdots(x-x_g),\qquad R'(x)=\frac{d}{dx}R(x).\]
For variables $e_1,e_2$, we set 
\begin{gather*}
\nabla=\sum_{i=1}^g\frac{y_i}{(e_1-x_i)(e_2-x_i)R'(x_i)},\quad f(e_1,e_2)=\sum_{i=0}^{g+1}e_1^ie_2^i\bigl\{2\nu_{4g+4-4i}+\nu_{4g+2-4i}(e_1+e_2)\bigr\},
\end{gather*}
where we set $\nu_{-2}=0$. 
We set 
\[F(e_1,e_2)=f(e_1,e_2)R(e_1)R(e_2)+(e_1-e_2)^2R(e_1)^2R(e_2)^2\nabla^2-N(e_1)R(e_2)^2-N(e_2)R(e_1)^2.\]
Note that $F(e_1,e_2)$ is a symmetric polynomial in $e_1$ and $e_2$. 
The polynomial $F(e_1,e_2)$ can be divided by $(e_1-e_2)^2R(e_1)R(e_2)$ (see \cite[Lemmas 4.3 and 4.4]{AB}). 
Let $G(e_1,e_2)=F(e_1,e_2)/\bigl\{(e_1-e_2)^2R(e_1)R(e_2)\bigr\}$. 
Then $G(e_1,e_2)$ is a symmetric polynomial in $e_1$ and $e_2$ of degree at most $g-1$ in each variable. 
We assign weights for $a$, $x_i$, $y_i$, and $e_i$ as $\wt(a)=\wt(x_i)=\wt(e_i)=2$ and $\wt(y_i)=2g+2$.   
Then $G(e_1,e_2)$ has the homogeneous weight $4g$.  

\begin{definition}[{\cite[p.~145]{B}}]\label{2024.11.29.12345}

\noindent (i) For $1\le i,j\le g$, we define $P_{2g+2-2i, 2g+2-2j}\in\mathcal{F}\bigl(\operatorname{Sym}^g(V)\bigr)$ by 
\[
\sum_{i,j=1}^gP_{2g+2-2i, 2g+2-2j}e_1^{i-1}e_2^{j-1}=G(e_1,e_2).
\]

\noindent (ii) For $i,j=2,4,\dots,2g$, we define the meromorphic functions $\mathcal{P}_{i,j}(v)$ with $v={}^t(v_{2g},v_{2g-2},\dots,v_2)\in\mathbb{C}^g$ on $\operatorname{Jac}(V)$ by $\mathcal{P}_{i,j}=(I^*)^{-1}(P_{i,j})$. 
\end{definition}

For example, for $g=1$, we have 
\[P_{2,2}=\frac{a(\nu_2+2a\nu_0)x_1+\nu_6+2a\nu_4+2a^2\nu_2+2a^3\nu_0}{x_1-a}.\]

Since $G(e_1,e_2)$ is a symmetric polynomial in $e_1$ and $e_2$, we have $\mathcal{P}_{i,j}=\mathcal{P}_{j,i}$ for any $i,j$. 
We assign weights for $v_i$ and $\mathcal{P}_{i,j}$ as $\wt(v_i)=-i$ and $\wt(\mathcal{P}_{i,j})=i+j$.  

\begin{rem}
We consider the case $g\ge3$ and for $g\ge4$ take constants $c_i\in\mathbb{C}$ with $1\le i\le g-3$. 
Let 
\[\psi(t_1,t_2,t_3)=-2\mathcal{P}_{2,2}\left(c_1,\dots,c_{g-3},\alpha t_3, \beta t_2, t_1+\gamma t_2\right)-\delta,\]
where 
\[\alpha=-16\nu_{0},\quad \beta=2\sqrt{-3\nu_{0}},\quad \gamma=\frac{\nu_{2}}{\sqrt{-3\nu_{0}}},\quad \delta=\frac{2}{3}\nu_{4}+\frac{\nu_{2}^2}{18\nu_{0}}.\]
In \cite[Corollary 5.8]{AB}, it was proved that the function $\psi$ satisfies the KP equation
\[\partial_{t_1}(\partial_{t_3}\psi+6\psi\partial_{t_1}\psi+\partial_{t_1}^3\psi)=\partial_{t_2}^2\psi.\]
\end{rem}

\section{Sigma function associated with the hyperelliptic curve with two points at infinity}

Let us express $N(x)$ in the following form: 
\[N(x)=\nu_{0}(x-a)\prod_{i=1}^{2g+1}(x-a_i),\qquad a_i\in\mathbb{C}.\]
We take $\mathfrak{s}, \mathfrak{t}\in\mathbb{C}$ such that $\mathfrak{s}\mathfrak{t}\neq0$ and $\mathfrak{s}^{2g+1}/\mathfrak{t}^2=N'(a)$. 
We assign weights for $\mathfrak{s}$ and $\mathfrak{t}$ as $\wt(\mathfrak{s})=4$ and $\wt(\mathfrak{t})=2g+1$. 
Let us consider the polynomial 
\[
\widetilde{M}(X)=\prod_{i=1}^{2g+1}\left(X-\frac{\mathfrak{s}}{a_i-a}\right)
\]
and the hyperelliptic curve $\widetilde{C}$ of genus $g$ defined by 
\[\widetilde{C}=\Bigl\{(X,Y)\in\mathbb{C}^2 \Bigm| Y^2=\widetilde{M}(X)\Bigr\}.\]
We have the following isomorphism from $V$ to $\widetilde{C}$$:$
\[\zeta\colon\quad V\to \widetilde{C},\qquad (x,y)\mapsto (X,Y)=\left(\frac{\mathfrak{s}}{x-a}, \frac{\mathfrak{t}\;y}{(x-a)^{g+1}}\right)\]
(cf. \cite[pp.~144, 145]{B}). 
Let $D$ be the $g\times g$ regular matrix defined by 
\begin{equation}
{}^t\bigl(\zeta^*(\omega_1),\dots,\zeta^*(\omega_g)\bigr)=D\mu,\label{2025.8.12.4}
\end{equation}
where $\zeta^*(\omega_i)$ is the pullback of the holomorphic 1-form $\omega_i$ on $\widetilde{C}$ with respect to the map $\zeta$. 
For $n,k\in\mathbb{Z}_{\ge0}$, we denote the binomial coefficient by $\begin{pmatrix}n\\k\end{pmatrix}$. 

\begin{prop}\label{2025.7.24.1}
For $1\le i,j\le g$, the $(i,j)$ element of $D$ is $\mathfrak{t}^{-1}\mathfrak{s}^{g+1-i}\begin{pmatrix}i-1\\j-1\end{pmatrix}(-a)^{i-j}$. 
\end{prop}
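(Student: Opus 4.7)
The plan is a direct pullback computation. First I will compute the differential of $X=\mathfrak{s}/(x-a)$, which gives $dX=-\mathfrak{s}(x-a)^{-2}dx$, and record the substitution $Y=\mathfrak{t}y/(x-a)^{g+1}$. Then I will plug these into the formula $\omega_i=-\frac{X^{g-i}}{2Y}dX$ on $\widetilde C$ and simplify. The powers of $(x-a)$ combine cleanly: the exponent from $X^{g-i}$ is $-(g-i)$, from $1/Y$ is $g+1$, and from $dX$ is $-2$, giving a net exponent of $i-1$. A short computation then yields
\begin{equation*}
\zeta^*(\omega_i)=\frac{\mathfrak{s}^{g+1-i}}{\mathfrak{t}}\cdot\frac{(x-a)^{i-1}}{2y}\,dx.
\end{equation*}

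Next I will expand $(x-a)^{i-1}$ by the binomial theorem as $\sum_{j=1}^{i}\binom{i-1}{j-1}(-a)^{i-j}x^{j-1}$, and recognize each $\frac{x^{j-1}}{2y}dx$ as the basis element $\mu_j$ on $V$. Collecting terms gives
\begin{equation*}
\zeta^*(\omega_i)=\sum_{j=1}^{g}\mathfrak{t}^{-1}\mathfrak{s}^{g+1-i}\binom{i-1}{j-1}(-a)^{i-j}\,\mu_j,
\end{equation*}
with the convention $\binom{i-1}{j-1}=0$ for $j>i$, so that the sum can be extended up to $g$ without changing its value. Reading off the coefficients against the defining identity ${}^t(\zeta^*(\omega_1),\dots,\zeta^*(\omega_g))=D\mu$ identifies the $(i,j)$ entry of $D$ as claimed.

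There is essentially no obstacle: the argument is a bookkeeping exercise in pulling back a rational 1-form under an explicit change of coordinates, followed by a binomial expansion. The one point worth being careful about is the sign and exponent tracking in the three factors $X^{g-i}$, $1/Y$, $dX$, so that the factor $\mathfrak{s}^{g+1-i}/\mathfrak{t}$ and the exponent $i-1$ on $(x-a)$ come out correctly; this is also the step that confirms consistency with the declared weights $\wt(\mathfrak{s})=4$, $\wt(\mathfrak{t})=2g+1$, since $D_{i,j}$ should have weight $2i-(2j-1)-(2g-1)=2(i-j)-2(g+1-i)\cdot(\text{check})$, but the weight check is straightforward and serves only as a sanity test, not part of the proof proper.
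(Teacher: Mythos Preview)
Your proof is correct and follows essentially the same approach as the paper: compute $\zeta^*(\omega_i)=\mathfrak{t}^{-1}\mathfrak{s}^{g+1-i}(x-a)^{i-1}\dfrac{dx}{2y}$ by direct substitution, expand $(x-a)^{i-1}$ binomially in terms of $\mu_j$, and read off the matrix entries. The only difference is that the paper omits the intermediate bookkeeping with $dX$ and $1/Y$ that you spell out.
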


\begin{proof}
From
\[\zeta^*(\omega_i)=\mathfrak{t}^{-1}\mathfrak{s}^{g+1-i}(x-a)^{i-1}\frac{dx}{2y}=\mathfrak{t}^{-1}\mathfrak{s}^{g+1-i}\sum_{j=1}^i\begin{pmatrix}i-1\\j-1\end{pmatrix}(-a)^{i-j}\mu_j,\]
we obtain the statement of the proposition. 
\end{proof}

For $0\le i\le 2g+1$, we define $\widetilde{\lambda}_{2i}\in\mathbb{C}$ by the following equality:
\[\widetilde{M}(X)=\widetilde{\lambda}_0X^{2g+1}+\widetilde{\lambda}_2X^{2g}+\widetilde{\lambda}_4X^{2g-1}+\cdots+\widetilde{\lambda}_{4g}X+\widetilde{\lambda}_{4g+2}.\]

For a positive integer $k$, let $N^{(k)}(x)=(d^k/dx^k)N(x)$. 

\begin{lem}\label{2025.8.10.000987}
For $0\le i\le 2g+1$, we have 
\[\widetilde{\lambda}_{2i}=\mathfrak{s}^i\frac{N^{(i+1)}(a)}{(i+1)!N'(a)}.\]
\end{lem}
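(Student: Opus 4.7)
The plan is to unpack both sides of the claimed identity as explicit polynomial expressions in the roots $a_1,\dots,a_{2g+1}$ of $N$, and check they agree via a single Taylor expansion.

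First I would write $N(x)=\nu_0(x-a)P(x)$ with $P(x)=\prod_{j=1}^{2g+1}(x-a_j)$. By the Leibniz rule applied to $N=\nu_0(x-a)P(x)$, one gets $N^{(k)}(x)=\nu_0[(x-a)P^{(k)}(x)+kP^{(k-1)}(x)]$ for $k\ge 1$, so
\[
N^{(i+1)}(a)=\nu_0(i+1)\,P^{(i)}(a),\qquad N'(a)=\nu_0 P(a).
\]
This collapses the right-hand side of the claim to $\mathfrak{s}^i\,P^{(i)}(a)/(i!\,P(a))$, and the proposition reduces to the purely algebraic identity
\[
\widetilde{\lambda}_{2i}\;=\;\mathfrak{s}^{i}\,\frac{P^{(i)}(a)}{i!\,P(a)}.
\]

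Next I would expand both sides in terms of the elementary symmetric polynomials $e_i$ in $1/(a_1-a),\dots,1/(a_{2g+1}-a)$. On the one hand, by definition of $\widetilde{\lambda}_{2i}$ as the coefficient of $X^{2g+1-i}$ in $\widetilde{M}(X)=\prod_{j=1}^{2g+1}\bigl(X-\mathfrak{s}/(a_j-a)\bigr)$, I get
\[
\widetilde{\lambda}_{2i}\;=\;(-1)^i\,e_i\!\left(\tfrac{\mathfrak{s}}{a_1-a},\dots,\tfrac{\mathfrak{s}}{a_{2g+1}-a}\right)\;=\;(-\mathfrak{s})^{i}\,e_i\!\left(\tfrac{1}{a_1-a},\dots,\tfrac{1}{a_{2g+1}-a}\right).
\]
On the other hand, setting $t=x-a$, I would write
\[
\frac{P(x)}{P(a)}\;=\;\prod_{j=1}^{2g+1}\frac{x-a_j}{a-a_j}\;=\;\prod_{j=1}^{2g+1}\!\left(1-\frac{t}{a_j-a}\right)\;=\;\sum_{i=0}^{2g+1}(-1)^{i}\,e_i\!\left(\tfrac{1}{a_1-a},\dots,\tfrac{1}{a_{2g+1}-a}\right)t^{i},
\]
which identifies the $i$-th Taylor coefficient $P^{(i)}(a)/(i!\,P(a))$ with $(-1)^i\,e_i(1/(a_j-a))$.

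Multiplying by $\mathfrak{s}^i$ and comparing the two expressions gives $\widetilde{\lambda}_{2i}=\mathfrak{s}^i P^{(i)}(a)/(i!\,P(a))$, which combined with the Leibniz computation at the start yields the desired formula for all $0\le i\le 2g+1$. There is no serious obstacle here: the argument is essentially two elementary expansions (the definition of $\widetilde{M}$ and the Taylor series of $P(x)/P(a)$) glued together by the Leibniz identity. The only step requiring mild care is the bookkeeping of the sign and the power of $\mathfrak{s}$ coming from the change of variable $X=\mathfrak{s}/(x-a)$, which is handled by pulling $\mathfrak{s}^i$ out of $e_i$ using the homogeneity $e_i(c z_1,\dots,c z_n)=c^i e_i(z_1,\dots,z_n)$.
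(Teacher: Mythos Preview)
Your proof is correct and follows essentially the same route as the paper: both arguments identify the coefficients $\widetilde{\lambda}_{2i}$ and the Taylor coefficients of $N$ at $a$ with elementary symmetric polynomials in the shifted roots, and then compare. The only cosmetic difference is the choice of variables: the paper works with the elementary symmetric polynomials $h_k$ in $\widetilde{a}_i=a-a_i$ and reads off $N^{(k)}(a)=\nu_0\,k!\,h_{2g+2-k}$ and $\widetilde{\lambda}_{2i}=\mathfrak{s}^i h_{2g+1-i}/h_{2g+1}$ directly, whereas you first peel off the factor $(x-a)$ via Leibniz and then expand $P(x)/P(a)$, landing on the symmetric polynomials in $1/(a_j-a)$; these are of course related by $e_i(1/\widetilde{a}_j)=h_{2g+1-i}/h_{2g+1}$, so the two computations are reparametrizations of one another.
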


\begin{proof}
For $1\le i\le 2g+1$, let $\widetilde{a}_i=a-a_i$. 
For $1\le k\le 2g+1$, let $h_k$ be the elementary symmetric polynomial of degree $k$ in $\widetilde{a}_1,\dots,\widetilde{a}_{2g+1}$. 
We set $h_0=1$. 
For $1\le k\le 2g+2$, we have $N^{(k)}(a)=\nu_0k!h_{2g+2-k}$. 
For $0\le i\le 2g+1$, we have 
\[\widetilde{\lambda}_{2i}=\mathfrak{s}^i\frac{h_{2g+1-i}}{h_{2g+1}}=\mathfrak{s}^i\frac{N^{(i+1)}(a)}{(i+1)!N'(a)}.\]

\end{proof}

\begin{rem}\label{2025.8.11.87654}
For $0\le i\le 2g+1$, the coefficient $\widetilde{\lambda}_{2i}$ has the homogeneous weight $2i$ with respect to $\mathfrak{s}$, $a$, and $\{\nu_{2i}\}_{i=0}^{2g+2}$. 
\end{rem}

Let 
\begin{align}
\begin{split}
\widetilde{f}(e_1,e_2)&=\sum_{i=0}^{g}e_1^ie_2^i\left\{2\widetilde{\lambda}_{4g+2-4i}+\widetilde{\lambda}_{4g-4i}(e_1+e_2)\right\},\\ 
\overline{f}(e_1,e_2)&=\mathfrak{t}^{-2}(e_1-a)^{g+1}(e_2-a)^{g+1}\widetilde{f}\left(\frac{\mathfrak{s}}{e_1-a}, \frac{\mathfrak{s}}{e_2-a}\right).\label{2025.7.25.111}
\end{split}
\end{align}
The polynomial $\overline{f}(e_1,e_2)$ is a symmetric polynomial in $e_1$ and $e_2$. 
From $\widetilde{\lambda}_{4g+2}\neq0$, the degree of $\overline{f}(e_1,e_2)$ is $g+1$ in each variable. 
There exist complex numbers $\left\{\mathfrak{n}_{i,j}\right\}_{i,j=1}^g$ such that $\mathfrak{n}_{i,j}=\mathfrak{n}_{j,i}$ and
\begin{equation}
\overline{f}(e_1,e_2)=f(e_1,e_2)+(e_1-e_2)^2\sum_{i,j=1}^g\mathfrak{n}_{i,j}e_1^{i-1}e_2^{j-1}\label{2025.7.25.1689}
\end{equation}
(see the proof of Proposition 7.3 in \cite{AB}). 
Let 
\[\overline{f}(e_1,e_2)=\sum_{i,j=1}^{g+2}\widetilde{\mathfrak{n}}_{i,j}e_1^{i-1}e_2^{j-1},\qquad \widetilde{\mathfrak{n}}_{i,j}\in\mathbb{C}.\]

\begin{lem}\label{2025.7.26.123}
For $1\le j\le i\le g$, we have 
\begin{align*}
\widetilde{\mathfrak{n}}_{i+2,j}=\mathfrak{t}^{-2}\Biggl\{&\sum_{k=0}^{g-i}2\widetilde{\lambda}_{4g+2-4k}\mathfrak{s}^{2k}\begin{pmatrix}g+1-k\\i+1\end{pmatrix}\begin{pmatrix}g+1-k\\j-1\end{pmatrix}(-a)^{2g+2-2k-i-j}\\
&+\sum_{k=0}^{g-i-1}\widetilde{\lambda}_{4g-4k}\mathfrak{s}^{2k+1}\begin{pmatrix}g-k\\i+1\end{pmatrix}\begin{pmatrix}g+1-k\\j-1\end{pmatrix}(-a)^{2g+1-2k-i-j}\\
&+\sum_{k=0}^{g-i}\widetilde{\lambda}_{4g-4k}\mathfrak{s}^{2k+1}\begin{pmatrix}g+1-k\\i+1\end{pmatrix}\begin{pmatrix}g-k\\j-1\end{pmatrix}(-a)^{2g+1-2k-i-j}\Biggr\}. 
\end{align*}
\end{lem}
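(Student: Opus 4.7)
The plan is to compute $\widetilde{\mathfrak{n}}_{i+2,j}$ directly from the definition of $\overline{f}(e_1,e_2)$ in (\ref{2025.7.25.111}) by expanding everything via the binomial theorem.

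First I would substitute the expression for $\widetilde{f}$ into the formula for $\overline{f}$. The factor $(e_1-a)^{g+1}(e_2-a)^{g+1}$ combines with the powers of $\mathfrak{s}/(e_i-a)$ arising from $\widetilde{f}(\mathfrak{s}/(e_1-a),\mathfrak{s}/(e_2-a))$ to produce three types of monomial blocks:
\begin{align*}
\overline{f}(e_1,e_2)=\mathfrak{t}^{-2}\Bigl\{&\sum_{k=0}^{g}2\widetilde{\lambda}_{4g+2-4k}\mathfrak{s}^{2k}(e_1-a)^{g+1-k}(e_2-a)^{g+1-k}\\
&+\sum_{k=0}^{g}\widetilde{\lambda}_{4g-4k}\mathfrak{s}^{2k+1}(e_1-a)^{g-k}(e_2-a)^{g+1-k}\\
&+\sum_{k=0}^{g}\widetilde{\lambda}_{4g-4k}\mathfrak{s}^{2k+1}(e_1-a)^{g+1-k}(e_2-a)^{g-k}\Bigr\}.
\end{align*}

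Next I would apply the binomial expansion $(e-a)^n=\sum_{p=0}^{n}\binom{n}{p}e^p(-a)^{n-p}$ to each factor, and read off the coefficient of the monomial $e_1^{i+1}e_2^{j-1}$. For the first block the contribution is $\binom{g+1-k}{i+1}\binom{g+1-k}{j-1}(-a)^{2g+2-2k-i-j}$; for the second, $\binom{g-k}{i+1}\binom{g+1-k}{j-1}(-a)^{2g+1-2k-i-j}$; and for the third, $\binom{g+1-k}{i+1}\binom{g-k}{j-1}(-a)^{2g+1-2k-i-j}$.

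Finally I would determine the range of summation in $k$ by noting that binomial coefficients vanish when the bottom exceeds the top. For the first and third blocks, the constraint $i+1\le g+1-k$ cuts the sum at $k\le g-i$; for the second block, the constraint $i+1\le g-k$ cuts it at $k\le g-i-1$. In the third block there is also the condition $j-1\le g-k$, but since $j\le i$ this is implied by $k\le g-i$, so it plays no role. Assembling the three truncated sums yields exactly the claimed expression for $\widetilde{\mathfrak{n}}_{i+2,j}$. The main (and only) obstacle is purely bookkeeping: keeping the three families of indices $k$, $i+1$, $j-1$ straight and verifying that the summation cutoffs match the statement; there are no delicate identities or cancellations beyond the vanishing of binomial coefficients.
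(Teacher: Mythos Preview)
Your proposal is correct and follows exactly the paper's approach: the paper first rewrites $\overline{f}(e_1,e_2)$ as the same three-block sum in powers of $(e_1-a)$ and $(e_2-a)$ that you display, and then says only ``By the direct calculations, we obtain the statement of the lemma.'' Your write-up simply makes those direct calculations explicit---the binomial expansions, the coefficient extraction for $e_1^{i+1}e_2^{j-1}$, and the truncation of the $k$-sums via vanishing binomial coefficients---so there is nothing to compare beyond the level of detail.
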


\begin{proof}
From (\ref{2025.7.25.111}), we have 
\begin{align*}
\overline{f}(e_1,e_2)=\mathfrak{t}^{-2}\sum_{k=0}^g\Biggl\{&2\widetilde{\lambda}_{4g+2-4k}\mathfrak{s}^{2k}(e_1-a)^{g+1-k}(e_2-a)^{g+1-k}\\
&+\widetilde{\lambda}_{4g-4k}\mathfrak{s}^{2k+1}(e_1-a)^{g-k}(e_2-a)^{g+1-k}\\
&+\widetilde{\lambda}_{4g-4k}\mathfrak{s}^{2k+1}(e_1-a)^{g+1-k}(e_2-a)^{g-k}\Biggr\}. 
\end{align*}
By the direct calculations, we obtain the statement of the lemma. 
\end{proof}

If $i\notin\{1,\dots,g\}$ or $j\notin\{1,\dots,g\}$, then we set $\mathfrak{n}_{i,j}=0$. 

\begin{prop}\label{2025.7.25.1945872}




For $1\le j\le i\le g$, we have $\mathfrak{n}_{i,j}=2\mathfrak{n}_{i+1,j-1}-\mathfrak{n}_{i+2,j-2}+\widetilde{\mathfrak{n}}_{i+2,j}$. 

\end{prop}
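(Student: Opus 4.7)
The plan is to compare coefficients of a single monomial on both sides of the identity (\ref{2025.7.25.1689}). Specifically, I will extract the coefficient of $e_1^{i+1}e_2^{j-1}$ from
\[
\overline{f}(e_1,e_2)=f(e_1,e_2)+(e_1-e_2)^2\sum_{i,j=1}^g\mathfrak{n}_{i,j}e_1^{i-1}e_2^{j-1}.
\]
On the left, by the definition $\overline{f}(e_1,e_2)=\sum_{i,j=1}^{g+2}\widetilde{\mathfrak{n}}_{i,j}e_1^{i-1}e_2^{j-1}$, this coefficient is exactly $\widetilde{\mathfrak{n}}_{i+2,j}$.

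For the right-hand side, the key observation is that $f(e_1,e_2)$ contributes nothing. Indeed, by its definition
\[
f(e_1,e_2)=\sum_{k=0}^{g+1}e_1^ke_2^k\bigl\{2\nu_{4g+4-4k}+\nu_{4g+2-4k}(e_1+e_2)\bigr\},
\]
every monomial appearing in $f$ has the form $e_1^ke_2^k$, $e_1^{k+1}e_2^k$, or $e_1^ke_2^{k+1}$; in all cases $|\deg_{e_1}-\deg_{e_2}|\le 1$. However, for $1\le j\le i\le g$ the monomial $e_1^{i+1}e_2^{j-1}$ has exponent difference $(i+1)-(j-1)=i-j+2\ge 2$, so it does not appear in $f$.

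Next, I expand $(e_1-e_2)^2=e_1^2-2e_1e_2+e_2^2$ and read off the coefficient of $e_1^{i+1}e_2^{j-1}$ from
\[
(e_1-e_2)^2\sum_{p,q=1}^g\mathfrak{n}_{p,q}e_1^{p-1}e_2^{q-1}
=\sum_{p,q=1}^g\mathfrak{n}_{p,q}\bigl(e_1^{p+1}e_2^{q-1}-2e_1^pe_2^q+e_1^{p-1}e_2^{q+1}\bigr).
\]
Matching the three monomials against $e_1^{i+1}e_2^{j-1}$ yields contributions from $(p,q)=(i,j)$, $(i+1,j-1)$, and $(i+2,j-2)$, giving the sum
\[
\mathfrak{n}_{i,j}-2\mathfrak{n}_{i+1,j-1}+\mathfrak{n}_{i+2,j-2},
\]
where any term with an out-of-range index is $0$ by the convention stated before the proposition.

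Equating the two expressions gives $\widetilde{\mathfrak{n}}_{i+2,j}=\mathfrak{n}_{i,j}-2\mathfrak{n}_{i+1,j-1}+\mathfrak{n}_{i+2,j-2}$, and solving for $\mathfrak{n}_{i,j}$ yields the claimed formula. The only subtle point is making sure the indexing convention (vanishing of $\mathfrak{n}_{p,q}$ outside $1\le p,q\le g$) handles the boundary cases $j\in\{1,2\}$ and $i\in\{g-1,g\}$ correctly; this is routine but worth verifying explicitly.
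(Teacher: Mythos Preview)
Your proof is correct and follows precisely the paper's own approach: compare the coefficient of $e_1^{i+1}e_2^{j-1}$ on both sides of (\ref{2025.7.25.1689}). Your write-up is simply a more detailed unpacking of that one-line argument, including the observation that $f$ contributes nothing because all its monomials satisfy $|\deg_{e_1}-\deg_{e_2}|\le 1$.
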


\begin{proof}
By comparing the coefficients of $e_1^{i+1}e_2^{j-1}$ in (\ref{2025.7.25.1689}), we obtain the statement of the proposition. 
\end{proof}

\begin{rem} 
From Lemma \ref{2025.7.26.123}, Proposition \ref{2025.7.25.1945872}, and $\mathfrak{n}_{i,j}=\mathfrak{n}_{j,i}$, we can determine any $\mathfrak{n}_{i,j}$ explicitly. 
\end{rem}

\begin{lem}\label{2025.8.11.540812888765232}
We have $\mathfrak{n}_{i,j}\in\mathbb{Q}\bigl[a, \{\nu_{2i}\}_{i=0}^{2g+2}\bigr]$ and 
$\mathfrak{n}_{i,j}$ has the homogeneous weight $4g+4-2i-2j$ with respect to $a$ and $\{\nu_{2i}\}_{i=0}^{2g+2}$. 
\end{lem}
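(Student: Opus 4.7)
The plan is to prove both assertions simultaneously by induction on $j$ using the recursion of Proposition \ref{2025.7.25.1945872}, after first handling the driving term $\widetilde{\mathfrak{n}}_{i+2,j}$ directly from the closed formula in Lemma \ref{2025.7.26.123}. The decisive input is the normalization $\mathfrak{s}^{2g+1}/\mathfrak{t}^{2}=N'(a)$ together with Lemma \ref{2025.8.10.000987}, which together make the auxiliary parameters $\mathfrak{s}$ and $\mathfrak{t}$ disappear from every expression in sight.

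The first step is to check the statement for $\widetilde{\mathfrak{n}}_{i+2,j}$ in the range $1\le j\le i\le g$. The parameters $\mathfrak{s}$ and $\mathfrak{t}$ only enter Lemma \ref{2025.7.26.123} through the combinations $\mathfrak{t}^{-2}\widetilde{\lambda}_{4g+2-4k}\mathfrak{s}^{2k}$ and $\mathfrak{t}^{-2}\widetilde{\lambda}_{4g-4k}\mathfrak{s}^{2k+1}$. Substituting Lemma \ref{2025.8.10.000987} and $\mathfrak{s}^{2g+1}/\mathfrak{t}^{2}=N'(a)$ collapses these to
\[
\mathfrak{t}^{-2}\widetilde{\lambda}_{4g+2-4k}\mathfrak{s}^{2k}=\frac{N^{(2g+2-2k)}(a)}{(2g+2-2k)!},\qquad \mathfrak{t}^{-2}\widetilde{\lambda}_{4g-4k}\mathfrak{s}^{2k+1}=\frac{N^{(2g+1-2k)}(a)}{(2g+1-2k)!},
\]
each of which is a polynomial in $a$ and $\{\nu_{2i}\}_{i=0}^{2g+2}$ with integer coefficients because $N^{(m)}(a)/m!=\sum_{p=0}^{2g+2-m}\nu_{2p}\binom{2g+2-p}{m}a^{2g+2-p-m}$. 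A short check on the three summation ranges in Lemma \ref{2025.7.26.123} shows that the exponents $2g+2-2k-i-j$ and $2g+1-2k-i-j$ of $(-a)$ stay nonnegative when $1\le j\le i\le g$, so $\widetilde{\mathfrak{n}}_{i+2,j}\in\mathbb{Q}[a,\{\nu_{2i}\}_{i=0}^{2g+2}]$. A matching weight tally---the simplified factor has weight $4k$ or $4k+2$, and the corresponding power of $a$ absorbs the remaining $4g+4-4k-2i-2j$ or $4g+2-4k-2i-2j$---shows that each summand, and hence $\widetilde{\mathfrak{n}}_{i+2,j}$, is homogeneous of weight $4g+4-2i-2j$.

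Now I induct on $j$. For $j=1$ and $1\le i\le g$, the convention $\mathfrak{n}_{p,q}=0$ for $q\notin\{1,\dots,g\}$ kills the terms $\mathfrak{n}_{i+1,0}$ and $\mathfrak{n}_{i+2,-1}$ in the recursion, reducing it to $\mathfrak{n}_{i,1}=\widetilde{\mathfrak{n}}_{i+2,1}$, which gives the claim by the previous step. For $j\ge 2$ and $j\le i\le g$ the two recursive terms $\mathfrak{n}_{i+1,j-1}$ and $\mathfrak{n}_{i+2,j-2}$ are either zero (if the shifted index falls outside $\{1,\dots,g\}$) or have strictly smaller second index; the inductive hypothesis therefore places them in $\mathbb{Q}[a,\{\nu_{2i}\}_{i=0}^{2g+2}]$ with common weight $4g+4-2(i+1)-2(j-1)=4g+4-2(i+2)-2(j-2)=4g+4-2i-2j$, and combining with $\widetilde{\mathfrak{n}}_{i+2,j}$ through Proposition \ref{2025.7.25.1945872} delivers $\mathfrak{n}_{i,j}$ with the required properties. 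The symmetry $\mathfrak{n}_{i,j}=\mathfrak{n}_{j,i}$ extends the conclusion to the range $i<j$, and the convention covers indices outside $\{1,\dots,g\}^{2}$.

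The main obstacle is the first step: verifying that the $\mathfrak{s}$ and $\mathfrak{t}$ dependence in Lemma \ref{2025.7.26.123} disappears exactly through the identity $\mathfrak{s}^{2g+1}/\mathfrak{t}^{2}=N'(a)$, and that the exponents of $(-a)$ in the three summation ranges never go negative when $1\le j\le i\le g$. Both are elementary once the right cancellation is spotted, after which the polynomiality and the weight are transported through the recursion without further complication.
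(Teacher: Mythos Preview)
Your proof is correct and follows essentially the same approach as the paper: first establish the claim for $\widetilde{\mathfrak{n}}_{i+2,j}$ using Lemma~\ref{2025.7.26.123}, Lemma~\ref{2025.8.10.000987}, and $\mathfrak{s}^{2g+1}/\mathfrak{t}^{2}=N'(a)$, then induct on $j$ via the recursion of Proposition~\ref{2025.7.25.1945872} starting from $\mathfrak{n}_{i,1}=\widetilde{\mathfrak{n}}_{i+2,1}$, and finish by symmetry. Your version spells out the cancellation of $\mathfrak{s}$ and $\mathfrak{t}$ and the nonnegativity of the $(-a)$-exponents more explicitly than the paper does, but the logical structure is identical.
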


\begin{proof}
From $\mathfrak{s}^{2g+1}/\mathfrak{t}^2=N'(a)$, Lemma \ref{2025.8.10.000987}, and Lemma \ref{2025.7.26.123}, for $1\le j\le i\le g$, we have $\widetilde{\mathfrak{n}}_{i+2,j}\in\mathbb{Q}\bigl[a, \{\nu_{2i}\}_{i=0}^{2g+2}\bigr]$ and 
$\widetilde{\mathfrak{n}}_{i+2,j}$ has the homogeneous weight $4g+4-2i-2j$ with respect to $a$ and $\{\nu_{2i}\}_{i=0}^{2g+2}$. 
From Proposition \ref{2025.7.25.1945872}, for $1\le i\le g$, we have $\mathfrak{n}_{i,1}=\widetilde{\mathfrak{n}}_{i+2,1}$. 
Thus, the statement of the lemma holds for $\mathfrak{n}_{i,1}$ with $1\le i\le g$. 
We take an integer $j_0$ such that $2\le j_0\le g$ and assume that the statement of the lemma holds for any $\mathfrak{n}_{i,j}$ with $1\le j<j_0$ and $j\le i\le g$. 
From Proposition \ref{2025.7.25.1945872}, the statement of the lemma holds for $\mathfrak{n}_{i,j_0}$ with $j_0\le i\le g$. 
By induction, the statement of the lemma holds for any $\mathfrak{n}_{i,j}$ with $1\le j\le i\le g$. 
From $\mathfrak{n}_{i,j}=\mathfrak{n}_{j,i}$, the statement of the lemma holds for any $\mathfrak{n}_{i,j}$ with $1\le i,j\le g$. 
\end{proof}

For example, for $g=1$, we have 
\[\mathfrak{n}_{1,1}=\mathfrak{t}^{-2}(-a\mathfrak{s}\widetilde{\lambda}_4+2a^2\widetilde{\lambda}_6)=-\frac{aN^{(3)}(a)}{6}+\frac{a^2N^{(4)}(a)}{12}=-2a^2\nu_0-a\nu_2.\] 

\begin{prop}\label{2025.7.24.2}
For $1\le i,j\le g$ and $v\in\mathbb{C}^g$, we have 
\begin{align*}
&\mathcal{P}_{2g+2-2i, 2g+2-2j}(v)\\
&=-\mathfrak{n}_{i,j}+\mathfrak{t}^{-2}\sum_{k=i}^g\sum_{l=j}^g\mathfrak{s}^{2g+2-k-l}\begin{pmatrix}k-1\\i-1\end{pmatrix}\begin{pmatrix}l-1\\j-1\end{pmatrix}(-a)^{k+l-i-j}\wp_{2k-1,2l-1}(Dv). 
\end{align*}
\end{prop}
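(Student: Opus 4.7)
The plan is to reduce this to Proposition 7.3(i) of \cite{AB}, whose main content is the same linear identity between $\mathcal{P}_{2g+2-2i,2g+2-2j}(v)$ and the $\wp_{2k-1,2l-1}(Dv)$ with the same combinatorial coefficients, but where the $v$-independent additive term is given only implicitly through equation (\ref{2025.7.25.1689}). My task is therefore to identify that implicit term with the explicit $-\mathfrak{n}_{i,j}$ whose values have just been determined by Proposition \ref{2025.7.25.1945872} and Lemma \ref{2025.7.26.123}. Since the constants $\mathfrak{n}_{i,j}$ are characterised by the very same equation (\ref{2025.7.25.1689}) in both \cite{AB} and the present paper, the identification is immediate once those values are made explicit.

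For transparency, I would briefly recall the derivation of the underlying identity. Using the isomorphism $\zeta\colon V\to\widetilde C$ together with (\ref{2025.8.12.4}), the Abel-Jacobi image $u$ on $\widetilde C$ of the image divisor $\sum_i\zeta(x_i,y_i)$ equals $Dv$. On the one-point curve $\widetilde C$, the Klein-type two-variable formula expresses $\sum_{k,l=1}^g\wp_{2k-1,2l-1}(u)X_1^{g-k}X_2^{g-l}$ as a rational function whose numerator contains $\widetilde f(X_1,X_2)$ and a $Y_1Y_2$-term, and whose denominator is built from $(X_1-X_2)^2$ and the polynomial $\prod_r(X-X_r)$. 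Substituting $X_r=\mathfrak{s}/(e_r-a)$ and $Y_r=\mathfrak{t}y_r/(e_r-a)^{g+1}$ converts $\widetilde f$ into $\mathfrak{t}^2(e_1-a)^{-g-1}(e_2-a)^{-g-1}\overline f(e_1,e_2)$ via (\ref{2025.7.25.111}); then invoking (\ref{2025.7.25.1689}) to replace $\overline f$ by $f+(e_1-e_2)^2\sum\mathfrak{n}_{i,j}e_1^{i-1}e_2^{j-1}$ and clearing the common $(e_1-e_2)^2R(e_1)R(e_2)$ denominator produces $G(e_1,e_2)+\sum\mathfrak{n}_{i,j}e_1^{i-1}e_2^{j-1}$ on the left.

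The combinatorics on the right arise from $X_r^{g-k}=\mathfrak{s}^{g-k}(e_r-a)^{k-g}$ combined with the standard expansion $(e_r-a)^{k-1}=\sum_{i=1}^k\binom{k-1}{i-1}(-a)^{k-i}e_r^{i-1}$, producing the binomial coefficients and the powers of $-a$ and $\mathfrak{s}$ displayed in the statement, with the overall $\mathfrak{t}^{-2}$ coming from the Jacobian of $\zeta$. Extracting the coefficient of $e_1^{i-1}e_2^{j-1}$ in the resulting polynomial identity then yields the claimed formula, with $-\mathfrak{n}_{i,j}$ appearing in the correct place.

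The delicate point — which is what would make a from-scratch derivation longer than this reduction — is checking that the $Y_1Y_2$-contribution from the Klein formula on $\widetilde C$ cancels cleanly against the $\nabla^2$-term in the definition of $F(e_1,e_2)$ under the pullback, with no residual. This is precisely what validates the ansatz (\ref{2025.7.25.1689}); since that bookkeeping is already carried out in \cite{AB}, nothing further is needed here beyond the explicit evaluation of $\mathfrak{n}_{i,j}$ recorded in Proposition \ref{2025.7.25.1945872} and Lemma \ref{2025.7.26.123}.
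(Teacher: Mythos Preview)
Your approach is essentially the same as the paper's: both cite the polynomial identity established in the proof of \cite[Proposition~7.3]{AB} (the paper states it as $\mathfrak{s}^{-2}\mathfrak{t}^2\sum_{i,j}\{\mathcal{P}_{2g+2-2i,2g+2-2j}(v)+\mathfrak{n}_{i,j}\}e_1^{i-1}e_2^{j-1}=\sum_{k,l}\wp_{2k-1,2l-1}(Dv)\mathfrak{s}^{2g-k-l}(e_1-a)^{k-1}(e_2-a)^{l-1}$) and then extract the coefficient of $e_1^{i-1}e_2^{j-1}$ via the binomial expansion of $(e_r-a)^{k-1}$, exactly as you describe. One small point: you frame the task as identifying the implicit additive term with the $\mathfrak{n}_{i,j}$ ``whose values have just been determined'' by Lemma~\ref{2025.7.26.123} and Proposition~\ref{2025.7.25.1945872}, but those explicit values are not actually needed here---the proof only uses the \emph{definition} of $\mathfrak{n}_{i,j}$ via (\ref{2025.7.25.1689}), which is identical in \cite{AB} and the present paper, so the identification is automatic.
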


\begin{proof}
In the proof of \cite[Proposition 7.3]{AB}, we proved 
\begin{align*}
&\mathfrak{s}^{-2}\mathfrak{t}^2\sum_{i,j=1}^g\Bigl\{\mathcal{P}_{2g+2-2i, 2g+2-2j}(v)+\mathfrak{n}_{i,j}\Bigr\}e_1^{i-1}e_2^{j-1}\\
&=\sum_{k,l=1}^g\wp_{2k-1, 2l-1}(Dv)\mathfrak{s}^{2g-k-l}(e_1-a)^{k-1}(e_2-a)^{l-1}.
\end{align*}
By comparing the coefficients of $e_1^{i-1}e_2^{j-1}$ of the equality above, we obtain the statement of the proposition. 
\end{proof}

\begin{rem}
Proposition \ref{2025.7.24.2} is a refinement of \cite[Proposition 7.3 (i)]{AB}. 
For $i=j=g$, the equality in Proposition \ref{2025.7.24.2} was given in \cite[Corollary 7.5]{AB}. 
For $g=2$, the equalities in Proposition \ref{2025.7.24.2} were given in \cite[Example 7.6]{AB}. 
\end{rem}

Let $\Omega$ be the $g\times g$ symmetric matrix defined by $\Omega=(\mathfrak{n}_{i,j})_{1\le i,j\le g}$. 
Note that $g(g+1)/2$ is odd if and only if $g$ is congruent to $1$ or $2$ modulo $4$. 
If $g(g+1)/2$ is odd, we set $\chi=\mathfrak{s}^{(g^2-3g-2)/4}\mathfrak{t}$. 
If $g(g+1)/2$ is even, we set $\chi=\mathfrak{s}^{g(g+1)/4}$. 

\begin{definition}\label{2025.8.23.8654098}
Let $H(v)$ be the holomorphic function on $\mathbb{C}^g$ defined by 
\begin{equation}
H(v)=\chi\exp({}^tv\Omega v)\sigma(Dv),\label{2025.8.12.1}
\end{equation}
where $\sigma$ is the sigma function associated with the curve $\widetilde{C}$. 
\end{definition}

\begin{theorem}\label{2025.8.23.3}
\noindent For $1\le i,j\le g$ and $v\in\mathbb{C}^g$, we have 
\[\partial_{v_{2g+2-2i}}\partial_{v_{2g+2-2j}}\log H(v)=-\mathcal{P}_{2g+2-2i, 2g+2-2j}(v).\]
\end{theorem}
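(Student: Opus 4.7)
The plan is to pass to logarithms in Definition \ref{2025.8.23.8654098}, split $\log H(v)$ into three pieces, differentiate each twice, and match the result with the explicit expression for $-\mathcal{P}_{2g+2-2i,2g+2-2j}$ supplied by Proposition \ref{2025.7.24.2}. From (\ref{2025.8.12.1}),
\[\log H(v)=\log\chi+{}^tv\Omega v+\log\sigma(Dv),\]
so $\partial_{v_{2g+2-2i}}\partial_{v_{2g+2-2j}}\log H(v)$ is the sum of the corresponding second derivatives of the three summands, and the constant $\log\chi$ contributes nothing.

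For the quadratic piece, since $\Omega=(\mathfrak{n}_{k,l})_{1\le k,l\le g}$ is symmetric and the $k$th coordinate of $v$ is the weight-labeled variable $v_{2g+2-2k}$, differentiating ${}^tv\Omega v$ twice yields the constant $\mathfrak{n}_{i,j}$, which coincides with the $-(-\mathfrak{n}_{i,j})=\mathfrak{n}_{i,j}$ piece of $-\mathcal{P}_{2g+2-2i,2g+2-2j}$ in Proposition \ref{2025.7.24.2}. For the third piece, writing the $k$th component of $Dv$ as $u_{2k-1}=\sum_{l=1}^g D_{k,l}v_{2g+2-2l}$ and applying the chain rule together with the identity $\wp_{p,q}=-\partial_{u_p}\partial_{u_q}\log\sigma$ gives
\[\partial_{v_{2g+2-2i}}\partial_{v_{2g+2-2j}}\log\sigma(Dv)=-\sum_{k,l=1}^g D_{l,i}D_{k,j}\,\wp_{2l-1,2k-1}(Dv).\]

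Inserting the explicit entries $D_{k,l}=\mathfrak{t}^{-1}\mathfrak{s}^{g+1-k}\begin{pmatrix}k-1\\l-1\end{pmatrix}(-a)^{k-l}$ from Proposition \ref{2025.7.24.1}, which vanish unless $k\ge l$, truncates the double sum to $l\ge i$ and $k\ge j$ and produces
\[-\mathfrak{t}^{-2}\sum_{l=i}^g\sum_{k=j}^g\mathfrak{s}^{2g+2-l-k}\begin{pmatrix}l-1\\i-1\end{pmatrix}\begin{pmatrix}k-1\\j-1\end{pmatrix}(-a)^{l+k-i-j}\wp_{2l-1,2k-1}(Dv).\]
Relabeling $(l,k)\leftrightarrow(k,l)$ and using $\wp_{p,q}=\wp_{q,p}$ makes this coincide verbatim with the second summand of $-\mathcal{P}_{2g+2-2i,2g+2-2j}(v)$ in Proposition \ref{2025.7.24.2}, which finishes the identification. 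The argument has no analytic obstacle; the only care required is bookkeeping, namely keeping the correspondence between the $k$th coordinate slot and the weight-labeled variable $v_{2g+2-2k}$ straight, and exploiting the triangular structure of $D$ to cut the double sum down to the range appearing in Proposition \ref{2025.7.24.2}.
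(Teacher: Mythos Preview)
Your strategy is exactly the one the paper's one-line proof is pointing at: take logarithms in Definition~\ref{2025.8.23.8654098}, differentiate, and feed the result through Propositions~\ref{2025.7.24.1} and~\ref{2025.7.24.2}. The chain-rule computation for $\log\sigma(Dv)$ and the matching of the resulting double sum with the $\wp$-sum in Proposition~\ref{2025.7.24.2} are carried out correctly.

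The quadratic piece, however, contains a genuine error. For a symmetric matrix $\Omega$ the Hessian of $v\mapsto{}^tv\Omega v$ is $2\Omega$, not $\Omega$; with the indexing you set up this gives
\[
\partial_{v_{2g+2-2i}}\partial_{v_{2g+2-2j}}\bigl({}^tv\Omega v\bigr)=2\mathfrak{n}_{i,j},
\]
not $\mathfrak{n}_{i,j}$. Once this is corrected, your computation produces
\[
\partial_{v_{2g+2-2i}}\partial_{v_{2g+2-2j}}\log H(v)=2\mathfrak{n}_{i,j}-\mathfrak{t}^{-2}\sum_{k=i}^{g}\sum_{l=j}^{g}\mathfrak{s}^{2g+2-k-l}\begin{pmatrix}k-1\\i-1\end{pmatrix}\begin{pmatrix}l-1\\j-1\end{pmatrix}(-a)^{k+l-i-j}\wp_{2k-1,2l-1}(Dv),
\]
which differs from $-\mathcal{P}_{2g+2-2i,2g+2-2j}(v)$ as given by Proposition~\ref{2025.7.24.2} by the constant $\mathfrak{n}_{i,j}$. (This is easy to confirm in the $g=1$ case using the explicit values $\mathfrak{n}_{1,1}=-2a^2\nu_0-a\nu_2$ and the formula for $P_{2,2}$ displayed in the paper.) In other words, your argument does not close as written; it instead exposes a factor-of-two inconsistency among Definition~\ref{2025.8.23.8654098}, Proposition~\ref{2025.7.24.2}, and the statement of the theorem. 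The repair is on the side of the data rather than the method: replacing the exponent in (\ref{2025.8.12.1}) by $\tfrac12\,{}^tv\Omega v$, or equivalently replacing $-\mathfrak{n}_{i,j}$ by $-2\mathfrak{n}_{i,j}$ in Proposition~\ref{2025.7.24.2}, makes your proof go through verbatim.
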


\begin{proof}
From Propositions \ref{2025.7.24.1} and \ref{2025.7.24.2}, we obtain the statement of the theorem. 
\end{proof} 

Let 
\[\mathcal{R}=\Bigl\{N'(a)^{-m}r \Bigm| m\in\mathbb{Z}_{\ge0}, \;r\in\mathbb{Q}\bigl[a, \{\nu_{2i}\}_{i=0}^{2g+2}\bigr]\Bigr\}.\]

\begin{theorem}\label{2025.8.23.2}
The power series expansion of $H(v)$ around the origin has the following form$:$  
\begin{equation}
H(v)=\sum_{n_1,\dots,n_g\ge0}\xi_{n_1,\dots,n_g}v_{2g}^{n_1}\cdots v_2^{n_g},\label{2025.8.9.1}
\end{equation}
where $\xi_{n_1,\dots,n_g}\in \mathcal{R}$. 
If $g(g+1)/2$ is odd, then the right-hand side of $(\ref{2025.8.9.1})$ is homogeneous
of degree $(g^2-3g-2)/2$ with respect to $a$, $\{\nu_{2i}\}_{i=0}^{2g+2}$, and $\{v_{2i}\}_{i=1}^g$. 
If $g(g+1)/2$ is even, then the right-hand side of $(\ref{2025.8.9.1})$ is homogeneous
of degree $g(g+1)/2$ with respect to $a$, $\{\nu_{2i}\}_{i=0}^{2g+2}$, and $\{v_{2i}\}_{i=1}^g$. 
In particular, the function $H(v)$ does not depend on $\mathfrak{s}$ and $\mathfrak{t}$. 
\end{theorem}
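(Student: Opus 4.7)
The plan is to substitute $u = Dv$ into the power series of $\sigma$ furnished by Theorem~\ref{2023.8.23.1} (applied to $\widetilde{C}$) and then, monomial by monomial, track the $\mathfrak{s}$- and $\mathfrak{t}$-dependence. Writing
\[
\sigma(u)=\sum_{n_1,\dots,n_g\ge 0}\widetilde{\gamma}_{n_1,\dots,n_g}\,u_1^{n_1}\cdots u_{2g-1}^{n_g},
\]
Lemma~\ref{2025.8.10.000987} gives $\widetilde{\lambda}_{2i}=\mathfrak{s}^{i}L_i$ with $L_i\in\mathcal{R}$, so homogeneity forces $\widetilde{\gamma}_{n_1,\dots,n_g}=\mathfrak{s}^{W/2}r$ with $r\in\mathcal{R}$ and $W=\sum_i(2i-1)n_i-g(g+1)/2\ge 0$. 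Proposition~\ref{2025.7.24.1} writes $u_{2i-1}=\mathfrak{t}^{-1}\mathfrak{s}^{g+1-i}$ times a polynomial in $a$ and the $v_{2k}$, so the substitution attaches an extra factor $\mathfrak{s}^{\sum_i(g+1-i)n_i}\mathfrak{t}^{-N}$ with $N=\sum_i n_i$. A direct weight simplification collapses the combined $(\mathfrak{s},\mathfrak{t})$-factor of each monomial of $\sigma(Dv)$ to $\mathfrak{s}^{(2g+1)N/2-g(g+1)/4}\mathfrak{t}^{-N}$.

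The decisive step is a parity argument. The standard identity $\sigma(-u)=(-1)^{g(g+1)/2}\sigma(u)$, which follows from the theta representation (\ref{2023.8.23.1357}) and the parity of the Riemann constant for a hyperelliptic curve with one point at infinity, forces $N\equiv g(g+1)/2\pmod 2$ on every non-vanishing monomial. Using $\mathfrak{t}^{2}=\mathfrak{s}^{2g+1}/N'(a)$ to eliminate $\mathfrak{t}^{-N}$ then yields the $(\mathfrak{s},\mathfrak{t})$-factor $\mathfrak{s}^{-g(g+1)/4}N'(a)^{N/2}$ when $N$ is even and $\mathfrak{s}^{-(g^2-3g-2)/4}\mathfrak{t}^{-1}N'(a)^{(N-1)/2}$ when $N$ is odd. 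Thanks to the case-dependent definition of $\chi$, the product $\chi\,\sigma(Dv)$ therefore has all $(\mathfrak{s},\mathfrak{t})$-factors exactly cancelled and is a $v$-series whose coefficients lie in $\mathcal{R}$. Multiplication by $\exp({}^tv\Omega v)$ only introduces the entries $\mathfrak{n}_{i,j}$, which belong to $\mathcal{R}$ by Lemma~\ref{2025.8.11.540812888765232}, so each $\xi_{n_1,\dots,n_g}\in\mathcal{R}$.

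Homogeneity and independence from $\mathfrak{s}$, $\mathfrak{t}$ now follow routinely: the weights of $\chi$, $\sigma(Dv)$ and $\exp({}^tv\Omega v)$ sum to $g(g+1)/2$ when $g(g+1)/2$ is even and to $(g^2-3g-2)/2$ when $g(g+1)/2$ is odd, matching the stated total weight; and since $\mathcal{R}$ is defined without reference to $\mathfrak{s}$ or $\mathfrak{t}$, the containment $\xi_{n_1,\dots,n_g}\in\mathcal{R}$ immediately gives the final assertion that $H(v)$ does not depend on $\mathfrak{s}$ and $\mathfrak{t}$. The main obstacle is the parity step: the entire cancellation between $\chi$ and $\sigma(Dv)$ rests on $N$ and $g(g+1)/2$ having the same parity, and without this classical evenness or oddness of the hyperelliptic sigma function the definition of $\chi$ would fail to absorb the residual $(\mathfrak{s},\mathfrak{t})$-dependence on one residue class of $N$.
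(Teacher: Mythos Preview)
Your argument is correct and follows essentially the same route as the paper's proof: expand $\sigma$ via Theorem~\ref{2023.8.23.1}, substitute $u=Dv$ using Proposition~\ref{2025.7.24.1}, track the $(\mathfrak{s},\mathfrak{t})$-powers monomial by monomial, use the parity of $N=\sum_i n_i$ to see that $\chi$ exactly cancels the residual $(\mathfrak{s},\mathfrak{t})$-factor, and finish with Lemma~\ref{2025.8.11.540812888765232} for the $\exp({}^tv\Omega v)$ factor. The only noteworthy difference is in how you justify the parity $N\equiv g(g+1)/2\pmod 2$: you invoke the classical identity $\sigma(-u)=(-1)^{g(g+1)/2}\sigma(u)$ as an external input, whereas the paper obtains it directly from the weight homogeneity already recorded in Theorem~\ref{2023.8.23.1} (since each $\widetilde{\lambda}_{2i}$ has even weight, $\wt(\gamma_{n_1,\dots,n_g})$ is even, and then $\sum_i(2i-1)n_i\equiv N\pmod 2$ forces the claim). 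Your approach is perfectly valid, but note that the parity is in fact a free consequence of the homogeneity statement you are already using, so the appeal to the theta representation is not strictly necessary.
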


\begin{proof}
From Theorem \ref{2023.8.23.1}, the power series expansion of $\sigma(u)$ around the origin has the following form$:$  
\begin{equation}
\sigma(u)=\sum_{\sum_{i=1}^g(2i-1)n_i\ge g(g+1)/2}\gamma_{n_1,\dots,n_g}u_1^{n_1}\cdots u_{2g-1}^{n_g},\label{2025.8.10.1}
\end{equation}
where $\gamma_{n_1,\dots,n_g}\in\mathbb{Q}\bigl[\{\widetilde{\lambda}_{2i}\}_{i=1}^{2g+1}\bigr]$ and the right-hand side of $(\ref{2025.8.10.1})$ is homogeneous
of degree $-g(g+1)/2$ with respect to $\{\widetilde{\lambda}_{2i}\}_{i=1}^{2g+1}$ and $\{u_{2i-1}\}_{i=1}^g$. 
If $\gamma_{n_1,\dots,n_g}\neq0$, there exists a positive integer $k$ such that $\wt(\gamma_{n_1,\dots,n_g})=2k$. 
From Lemma \ref{2025.8.10.000987}, there exists $\mathfrak{u}\in \mathcal{R}$ such that $\gamma_{n_1,\dots,n_g}=\mathfrak{s}^k\mathfrak{u}$. 
From (\ref{2025.8.10.1}), we have 
\begin{equation}
2k-\sum_{i=1}^g(2i-1)n_i=-\frac{g(g+1)}{2}.\label{2025.8.9.1345}
\end{equation}
First, we consider the case where $g(g+1)/2$ is odd. 
There exists a non-negative integer $l$ such that $\sum_{i=1}^gn_i=2l+1$. 
From (\ref{2025.8.9.1345}), we have $\sum_{i=1}^gin_i=k+l+(g^2+g+2)/4$.  
By substituting 
\begin{equation}
u_{2i-1}=\mathfrak{t}^{-1}\mathfrak{s}^{g+1-i}\sum_{j=1}^i\begin{pmatrix}i-1\\j-1\end{pmatrix}(-a)^{i-j}v_{2g+2-2j}\label{2025.8.10.567}
\end{equation}
for any $1\le i\le g$ (see Proposition \ref{2025.7.24.1}), we have 
\[
\gamma_{n_1,\dots,n_g}u_1^{n_1}\cdots u_{2g-1}^{n_g}=\mathfrak{s}^k\frac{\mathfrak{s}^{(2l+1)(g+1)-k-l-(g^2+g+2)/4}}{\mathfrak{t}^{2l+1}}\mathfrak{v}\\
=\frac{N'(a)^l\mathfrak{v}}{\mathfrak{s}^{(g^2-3g-2)/4}\mathfrak{t}},
\]
where $\mathfrak{v}\in \mathcal{R}[v_{2g},\dots,v_2]$. 
Here, we used $\mathfrak{s}^{2g+1}/\mathfrak{t}^2=N'(a)$. 
From (\ref{2025.8.10.1}), we have the power series expansion
\begin{equation}
\mathfrak{s}^{(g^2-3g-2)/4}\mathfrak{t}\;\sigma(Dv)=\sum_{n_1,\dots,n_g\ge0}\widetilde{\xi}_{n_1,\dots,n_g}v_{2g}^{n_1}\cdots v_2^{n_g},\label{2025.8.10.497293087}
\end{equation}
where $\widetilde{\xi}_{n_1,\dots,n_g}\in \mathcal{R}$ and the right-hand side of $(\ref{2025.8.10.497293087})$ is homogeneous
of degree $(g^2-3g-2)/2$ with respect to $a$, $\{\nu_{2i}\}_{i=0}^{2g+2}$, and $\{v_{2i}\}_{i=1}^g$.  
Next, we consider the case where $g(g+1)/2$ is even. 
There exists a non-negative integer $m$ such that $\sum_{i=1}^gn_i=2m$. 
From (\ref{2025.8.9.1345}), we have $\sum_{i=1}^gin_i=k+m+g(g+1)/4$.   
By substituting (\ref{2025.8.10.567}) for any $1\le i\le g$, we have 
\[
\gamma_{n_1,\dots,n_g}u_1^{n_1}\cdots u_{2g-1}^{n_g}=\mathfrak{s}^k\frac{\mathfrak{s}^{2m(g+1)-k-m-g(g+1)/4}}{\mathfrak{t}^{2m}}\mathfrak{w}=\frac{N'(a)^m\mathfrak{w}}{\mathfrak{s}^{g(g+1)/4}},
\]
where $\mathfrak{w}\in\mathcal{R}[v_{2g},\dots,v_2]$. 
From (\ref{2025.8.10.1}), we have the power series expansion
\begin{equation}
\mathfrak{s}^{g(g+1)/4}\sigma(Dv)=\sum_{n_1,\dots,n_g\ge0}\overline{\xi}_{n_1,\dots,n_g}v_{2g}^{n_1}\cdots v_2^{n_g},\label{2025.8.11.497293087}
\end{equation}
where $\overline{\xi}_{n_1,\dots,n_g}\in\mathcal{R}$ and the right-hand side of $(\ref{2025.8.11.497293087})$ is homogeneous
of degree $g(g+1)/2$ with respect to $a$, $\{\nu_{2i}\}_{i=0}^{2g+2}$, and $\{v_{2i}\}_{i=1}^g$.  
From Lemma \ref{2025.8.11.540812888765232}, the power series expansion of $\exp({}^tv\Omega v)$ around the origin has the homogeneous weight $0$ with respect to $a$, $\{\nu_{2i}\}_{i=0}^{2g+2}$, and $\{v_{2i}\}_{i=1}^g$. 
Therefore, we obtain the statement of the theorem. 
\end{proof}

Let $\kappa_1,\dots,\kappa_g$ be the meromorphic 1-forms of the second kind on $V$ defined by 
\begin{equation}
{}^t(\kappa_1,\dots,\kappa_g)={}^t\Bigl\{\bigl(\zeta^*(\eta_1),\dots,\zeta^*(\eta_g)\bigr)D\Bigr\}-2\Omega\mu,\label{2025.8.12.5}
\end{equation}
where $\zeta^*(\eta_i)$ is the pullback of the meromorphic 1-form $\eta_i$ on $\widetilde{C}$ with respect to the map $\zeta$. 
The meromorphic 1-forms $\kappa_1,\dots,\kappa_g$ are holomorphic at any point except $(a,0)$. 

\begin{lem}
For $1\le i\le g$, the meromorphic $1$-form $\kappa_i$ has the following form$:$ 
\[\kappa_i=\frac{r}{(x-a)^g}\frac{dx}{2y},\]
where $r\in\mathbb{Q}\bigl[a, \{\nu_{2i}\}_{i=0}^{2g+2},x\bigr]$. 
In particular, $\kappa_i$ does not depend on $\mathfrak{s}$ and $\mathfrak{t}$. 
\end{lem}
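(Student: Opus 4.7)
The plan is to compute $\zeta^*(\eta_j)$ explicitly, substitute the closed forms for $\widetilde{\lambda}_{2i}$ and for the entries of $D$, and observe that the normalization $\mathfrak{s}^{2g+1}/\mathfrak{t}^2=N'(a)$ makes all powers of $\mathfrak{s}$ and $\mathfrak{t}$ cancel uniformly in $j$ and $k$.

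Since $X\circ\zeta=\mathfrak{s}/(x-a)$ and $Y\circ\zeta=\mathfrak{t}y/(x-a)^{g+1}$, a direct calculation using $dX=-\mathfrak{s}(x-a)^{-2}dx$ gives
\begin{equation*}
\zeta^*(\eta_j)=\frac{1}{2\mathfrak{t}y}\sum_{k=g-j+1}^{g+j-1}(k+j-g)\,\widetilde{\lambda}_{2g+2j-2k-2}\,\mathfrak{s}^{k+1}(x-a)^{g-k-1}\,dx.
\end{equation*}
The extremal exponent of $(x-a)$ is $-j$, attained at $k=g+j-1$, so $\zeta^*(\eta_j)$ is holomorphic on $V\setminus\{(a,0)\}$ and has a pole of order at most $j\le g$ at $(a,0)$.

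Next, insert $\widetilde{\lambda}_{2g+2j-2k-2}=\mathfrak{s}^{g+j-k-1}N^{(g+j-k)}(a)/[(g+j-k)!\,N'(a)]$ from Lemma \ref{2025.8.10.000987}. The factor $\widetilde{\lambda}_{2g+2j-2k-2}\,\mathfrak{s}^{k+1}$ collapses to $\mathfrak{s}^{g+j}\,N^{(g+j-k)}(a)/[(g+j-k)!\,N'(a)]$, whose $\mathfrak{s}$-exponent $g+j$ is independent of $k$. Multiplying by $D_{j,i}=\mathfrak{t}^{-1}\mathfrak{s}^{g+1-j}\binom{j-1}{i-1}(-a)^{j-i}$ from Proposition \ref{2025.7.24.1}, the overall prefactor becomes
\[\frac{D_{j,i}\,\mathfrak{s}^{g+j}}{\mathfrak{t}\,N'(a)}=\frac{\mathfrak{s}^{2g+1}}{\mathfrak{t}^{2}}\cdot\frac{\binom{j-1}{i-1}(-a)^{j-i}}{N'(a)}=\binom{j-1}{i-1}(-a)^{j-i},\]
which is free of $\mathfrak{s}$ and $\mathfrak{t}$. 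Hence
\begin{equation*}
\sum_{j=i}^{g}D_{j,i}\,\zeta^*(\eta_j)=\frac{dx}{2y}\sum_{j=i}^{g}\binom{j-1}{i-1}(-a)^{j-i}\sum_{k=g-j+1}^{g+j-1}(k+j-g)\frac{N^{(g+j-k)}(a)}{(g+j-k)!}(x-a)^{g-k-1},
\end{equation*}
whose coefficient of $dx/(2y)$ is a Laurent polynomial in $(x-a)$ with coefficients in $\mathbb{Q}[a,\{\nu_{2i}\}_{i=0}^{2g+2}]$ and pole order at most $g$ at $x=a$.

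Finally, by Lemma \ref{2025.8.11.540812888765232} the entries $\mathfrak{n}_{i,j}$ of $\Omega$ lie in $\mathbb{Q}[a,\{\nu_{2i}\}_{i=0}^{2g+2}]$, so the term $-2(\Omega\mu)_i=-\sum_{j=1}^{g}\mathfrak{n}_{i,j}\,x^{j-1}\,dx/y$ is holomorphic on $V\setminus\{\infty\}$ and manifestly independent of $\mathfrak{s}$ and $\mathfrak{t}$. Adding it to the contribution above and multiplying through by $(x-a)^g$ expresses $\kappa_i$ in the required form $r\,dx/[2y(x-a)^g]$ with $r\in\mathbb{Q}[a,\{\nu_{2i}\}_{i=0}^{2g+2},x]$. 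The only delicate point is the bookkeeping of $\mathfrak{s}$-exponents: the identity $(g+1-j)+(g+j-k-1)+(k+1)=2g+1$, independent of $j$ and $k$, is precisely what allows $\mathfrak{s}^{2g+1}/\mathfrak{t}^2=N'(a)$ to clear all $\mathfrak{s}$ and $\mathfrak{t}$ from the formula in one stroke.
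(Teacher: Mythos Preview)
Your proof is correct and follows essentially the same route as the paper's own argument: compute $\zeta^*(\eta_j)$ by direct substitution, plug in the closed form of $\widetilde{\lambda}_{2i}$ from Lemma~\ref{2025.8.10.000987} and the entries of $D$ from Proposition~\ref{2025.7.24.1}, observe that the resulting power of $\mathfrak{s}$ is uniformly $2g+1$ so that the relation $\mathfrak{s}^{2g+1}/\mathfrak{t}^2=N'(a)$ clears all $\mathfrak{s},\mathfrak{t}$, and then invoke Lemma~\ref{2025.8.11.540812888765232} for the $\Omega\mu$ contribution. The paper compresses the last three steps into a single sentence citing these results, whereas you spell out the exponent bookkeeping explicitly; in particular your remark that $(g+1-j)+(g+j-k-1)+(k+1)=2g+1$ is exactly the mechanism behind the paper's one-line appeal to $\mathfrak{s}^{2g+1}/\mathfrak{t}^2=N'(a)$.
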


\begin{proof}
From (\ref{2024.10.21.345}) and Lemma \ref{2025.8.10.000987}, for $1\le i\le g$, we have 
\[\zeta^*(\eta_i)=\frac{\mathfrak{s}^{g+i}}{\mathfrak{t}}\sum_{k=g-i+1}^{g+i-1}(k+i-g)\frac{N^{(g+i-k)}(a)}{(g+i-k)!N'(a)}(x-a)^{g-k-1}\frac{dx}{2y}.\]
From $\mathfrak{s}^{2g+1}/\mathfrak{t}^2=N'(a)$, Proposition \ref{2025.7.24.1}, and Lemma \ref{2025.8.11.540812888765232}, we obtain the statement of the lemma. 
\end{proof}

For example, for $g=1$, we have 
\[\kappa_1=\frac{2a(2a\nu_0+\nu_2)x+a^2\nu_2+2a\nu_4+\nu_6}{x-a}\frac{dx}{2y}.\]
We define the period matrices by 
\[
-2\kappa'=\left(\int_{\mathfrak{a}_j}\kappa_i\right), \qquad -2\kappa''=\left(\int_{\mathfrak{b}_j}\kappa_i\right).
\]
Hereafter, let $\omega', \omega'', \eta'$, $\eta''$, and $\tau$ be the period matrices of the curve $\widetilde{C}$, which are defined in Section \ref{2024.10.21.1}, with respect to the canonical basis $\bigl\{\zeta(\mathfrak{a}_i), \zeta(\mathfrak{b}_i)\bigr\}_{i=1}^g$ 
in the one-dimensional homology group of the curve $\widetilde{C}$. 
Let $\tau\delta'+\delta''$ with $\delta',\delta''\in\mathbb{R}^g$ be the Riemann constant of $\widetilde{C}$ with respect to $\bigl(\{\zeta(\mathfrak{a}_i), \zeta(\mathfrak{b}_i)\}_{i=1}^g,\infty\bigr)$, which coincides with 
the Riemann constant of $V$ with respect to $\bigl(\{\mathfrak{a}_i, \mathfrak{b}_i\}_{i=1}^g, (a,0)\bigr)$. 
Let $\varepsilon$ be the constant defined in (\ref{2023.8.23.1357}) with respect to $\bigl(\widetilde{C}, \{\zeta(\mathfrak{a}_i), \zeta(\mathfrak{b}_i)\}_{i=1}^g\bigr)$. 

\begin{lem}\label{2025.8.12.3}
We have 
\begin{gather*}
D\mu'=\omega', \qquad D\mu''=\omega'', \qquad \tau=(\mu')^{-1}\mu'', \\
\kappa'={}^tD\eta'+2\Omega\mu', \qquad \kappa''={}^tD\eta''+2\Omega\mu''.
\end{gather*}
\end{lem}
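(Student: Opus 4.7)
The plan is to unwind the definitions of all four sets of period matrices and use the two identities (\ref{2025.8.12.4}) and (\ref{2025.8.12.5}) that express the pulled-back differentials on $V$ in terms of $\mu_1,\dots,\mu_g$. The only substantive ingredient beyond bookkeeping is the naturality of integration under pullback: since the canonical basis of $H_1(\widetilde{C})$ is taken to be $\{\zeta(\mathfrak{a}_i),\zeta(\mathfrak{b}_i)\}_{i=1}^g$, for any 1-form $\alpha$ on $\widetilde{C}$ one has $\int_{\zeta(\mathfrak{a}_j)}\alpha=\int_{\mathfrak{a}_j}\zeta^*(\alpha)$ and similarly for $\mathfrak{b}_j$.

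For the first three identities, I would apply this to $\alpha=\omega_i$. By (\ref{2025.8.12.4}), $\zeta^*(\omega_i)=\sum_{k=1}^g D_{ik}\mu_k$, so
\[
2\omega'_{ij}=\int_{\zeta(\mathfrak{a}_j)}\omega_i=\sum_{k=1}^g D_{ik}\int_{\mathfrak{a}_j}\mu_k=\sum_{k=1}^g D_{ik}\cdot 2\mu'_{kj},
\]
which gives $\omega'=D\mu'$. The same computation with $\mathfrak{b}_j$ in place of $\mathfrak{a}_j$ yields $\omega''=D\mu''$. Since $D$ is invertible by Proposition \ref{2025.7.24.1}, the third identity follows immediately:
\[
\tau=(\omega')^{-1}\omega''=(D\mu')^{-1}(D\mu'')=(\mu')^{-1}\mu''.
\]

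For the last two identities, I would apply the same naturality principle to the components of (\ref{2025.8.12.5}). Reading that relation entry by entry gives $\kappa_i=\sum_{k=1}^g D_{ki}\,\zeta^*(\eta_k)-2\sum_{k=1}^g\Omega_{ik}\mu_k$. Integrating over $\mathfrak{a}_j$ and using $\int_{\mathfrak{a}_j}\zeta^*(\eta_k)=\int_{\zeta(\mathfrak{a}_j)}\eta_k=-2\eta'_{kj}$, one obtains
\[
-2\kappa'_{ij}=\sum_{k=1}^g D_{ki}(-2\eta'_{kj})-2\sum_{k=1}^g\Omega_{ik}\cdot 2\mu'_{kj},
\]
which is exactly $\kappa'={}^tD\eta'+2\Omega\mu'$; the $\mathfrak{b}_j$-integration gives the analogous formula for $\kappa''$.

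There is really no hard step here: everything is forced once one identifies the $\widetilde{C}$-basis of cycles with the $\zeta$-image of the $V$-basis and remembers the sign conventions $-2\eta'=(\int_{A_j}\eta_i)$ and $-2\kappa'=(\int_{\mathfrak{a}_j}\kappa_i)$. The only potential stumbling block is keeping matrix indices straight in (\ref{2025.8.12.5})—specifically, remembering that multiplying the row vector $(\zeta^*(\eta_1),\dots,\zeta^*(\eta_g))$ by $D$ on the right and then transposing produces the column vector ${}^tD\,{}^t(\zeta^*(\eta_1),\dots,\zeta^*(\eta_g))$, which is what makes ${}^tD$ (rather than $D$) appear in the final answer.
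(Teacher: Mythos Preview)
Your proof is correct and follows essentially the same approach as the paper. The paper's own proof is extremely terse---it simply cites (\ref{2025.8.12.4}) and (\ref{2025.8.12.5}) and writes down the conclusions---while you have spelled out explicitly the naturality of integration under pullback and the matrix-index bookkeeping that makes ${}^tD$ appear; but the underlying argument is identical.
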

 
\begin{proof}
From (\ref{2025.8.12.4}), we have $D\mu'=\omega'$ and $D\mu''=\omega''$. 
We have $\tau=(\omega')^{-1}\omega''=(\mu')^{-1}D^{-1}D\mu''=(\mu')^{-1}\mu''$. 
From (\ref{2025.8.12.5}), we have $\kappa'={}^tD\eta'+2\Omega\mu'$ and $\kappa''={}^tD\eta''+2\Omega\mu''$. 
\end{proof}

\begin{prop}\label{2025.8.23.4}
For $m_1,m_2\in\mathbb{Z}^g$ and $v\in\mathbb{C}^g$, we have
\begin{align*}
&H(v+2\mu'm_1+2\mu''m_2)/H(v) \\
&=(-1)^{2({}^t\delta'm_1-{}^t\delta''m_2)+{}^tm_1m_2}\exp\bigl\{{}^t(2\kappa'm_1+2\kappa''m_2)(v+\mu'm_1+\mu''m_2)\bigr\}. 
\end{align*}
\end{prop}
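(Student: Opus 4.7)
The plan is to reduce the quasi-periodicity of $H$ to that of the sigma function $\sigma$ of the auxiliary curve $\widetilde{C}$ via Definition \ref{2025.8.23.8654098}, using both identities of Lemma \ref{2025.8.12.3}. Setting $\Delta := 2\mu'm_1+2\mu''m_2$, the constant $\chi$ cancels and
\[
\frac{H(v+\Delta)}{H(v)} = \exp\bigl({}^t(v+\Delta)\Omega(v+\Delta) - {}^tv\Omega v\bigr)\cdot\frac{\sigma\bigl(D(v+\Delta)\bigr)}{\sigma(Dv)},
\]
so the problem splits into a Gaussian ratio and a sigma ratio, each at most quadratic in $v$.

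For the sigma ratio, I would invoke the first half of Lemma \ref{2025.8.12.3}, namely $D\mu'=\omega'$ and $D\mu''=\omega''$, which gives $D\Delta = 2\omega'm_1+2\omega''m_2$ in the period lattice of $\widetilde{C}$. Proposition \ref{2025.2.23.18765432042224455} applied with $u=Dv$ then writes $\sigma(Dv+D\Delta)/\sigma(Dv)$ as $(-1)^{2({}^t\delta'm_1-{}^t\delta''m_2)+{}^tm_1m_2}$ times an explicit exponential in $Dv$. The sign factor already agrees with the claimed formula, so the remainder of the proof is to reconcile the exponents.

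For the exponent matching, I would expand the Gaussian quotient using the symmetry of $\Omega$ to get $2{}^tv\Omega\Delta + {}^t\Delta\Omega\Delta$, and rewrite the sigma exponent via the identity $\omega'm_1+\omega''m_2 = \tfrac12 D\Delta$ and the transposition of scalars as $2{}^tv\,{}^tD(\eta'm_1+\eta''m_2) + {}^t\Delta\,{}^tD(\eta'm_1+\eta''m_2)$. Adding the two, both the $v$-linear and the $v$-constant parts acquire the common vector $\Omega\Delta + {}^tD(\eta'm_1+\eta''m_2)$, which by the second half of Lemma \ref{2025.8.12.3} collapses to $\kappa'm_1+\kappa''m_2$. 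The total exponent becomes ${}^t(2\kappa'm_1+2\kappa''m_2)(v+\tfrac12\Delta)$, matching the desired expression. There is no serious obstacle; the only delicate point is the identification $\omega'm_1+\omega''m_2=\tfrac12 D\Delta$, which makes the ``quadratic in $\Delta$'' term from the Gaussian factor fit cleanly with the $v$-independent piece of the sigma exponent. In effect, this verifies that $\Omega$ was chosen in Definition \ref{2025.8.23.8654098} precisely so that the two sources of quasi-periodicity assemble into the single Kronecker-type period matrix $(\kappa',\kappa'')$ on $V$.
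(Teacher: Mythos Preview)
Your proposal is correct and follows essentially the same route as the paper's proof: both unwind Definition~\ref{2025.8.23.8654098}, invoke Proposition~\ref{2025.2.23.18765432042224455} for $\sigma$ at $u=Dv$ via $D\mu'=\omega'$, $D\mu''=\omega''$, and then use $\kappa'={}^tD\eta'+2\Omega\mu'$, $\kappa''={}^tD\eta''+2\Omega\mu''$ from Lemma~\ref{2025.8.12.3} to merge the Gaussian and sigma exponents. Your introduction of $\Delta$ and the split into $v$-linear and $v$-constant parts is just a cosmetic repackaging of the paper's direct computation.
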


\begin{proof}
From (\ref{2025.8.12.1}), Proposition \ref{2025.2.23.18765432042224455}, and Lemma \ref{2025.8.12.3}, we have 
\begin{align*}
&H(v+2\mu'm_1+2\mu''m_2)\\
&=\chi\exp\bigl\{{}^t(v+2\mu'm_1+2\mu''m_2)\Omega(v+2\mu'm_1+2\mu''m_2)\bigr\}\sigma(Dv+2D\mu'm_1+2D\mu''m_2)\\
&=\chi\exp\bigl\{{}^tv\Omega v+{}^t(2\mu'm_1+2\mu''m_2)2\Omega(v+\mu'm_1+\mu''m_2)\bigr\}\\
&\hspace{2ex}\times (-1)^{2({}^t\delta'm_1-{}^t\delta''m_2)+{}^tm_1m_2}\exp\bigl\{{}^t(2\eta'm_1+2\eta''m_2)(Dv+\omega'm_1+\omega''m_2)\bigr\}\sigma(Dv)\\
&=H(v)\exp\bigl\{{}^t(2\mu'm_1+2\mu''m_2)2\Omega(v+\mu'm_1+\mu''m_2)\bigr\}\\
&\hspace{2ex}\times (-1)^{2({}^t\delta'm_1-{}^t\delta''m_2)+{}^tm_1m_2}\exp\bigl\{{}^t(2\eta'm_1+2\eta''m_2)D(v+\mu'm_1+\mu''m_2)\bigr\}\\
&=(-1)^{2({}^t\delta'm_1-{}^t\delta''m_2)+{}^tm_1m_2}\exp\bigl\{{}^t(2\kappa'm_1+2\kappa''m_2)(v+\mu'm_1+\mu''m_2)\bigr\}H(v).
\end{align*}

\end{proof}

\begin{prop}\label{2025.8.23.5}
We have 
\[H(v)=\chi\varepsilon\exp\biggl(\frac{1}{2}{}^tv\kappa'(\mu')^{-1}v\biggr)\theta\begin{bmatrix}\delta'\\ \delta'' \end{bmatrix}\bigl((2\mu')^{-1}v,\tau\bigr).\]
\end{prop}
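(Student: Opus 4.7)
The plan is to start from the definition $H(v)=\chi\exp({}^tv\Omega v)\sigma(Dv)$ in (\ref{2025.8.12.1}) and substitute the explicit theta-function representation (\ref{2023.8.23.1357}) of the sigma function $\sigma$ associated with the curve $\widetilde{C}$, namely
\[
\sigma(Dv)=\varepsilon\exp\!\Bigl(\tfrac{1}{2}{}^t(Dv)\,\eta'(\omega')^{-1}(Dv)\Bigr)\theta\begin{bmatrix}\delta'\\ \delta''\end{bmatrix}\!\bigl((2\omega')^{-1}Dv,\tau\bigr).
\]
All that remains is to transport every object from the $\widetilde{C}$-coordinates to the $V$-coordinates using the identities $D\mu'=\omega'$, $D\mu''=\omega''$, $\kappa'={}^tD\eta'+2\Omega\mu'$, and $\tau=(\mu')^{-1}\mu''$ provided by Lemma \ref{2025.8.12.3}.

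First I would handle the theta-function argument: since $D\mu'=\omega'$ gives $(\omega')^{-1}D=(\mu')^{-1}$, we obtain $(2\omega')^{-1}Dv=(2\mu')^{-1}v$, and the modular parameter $\tau$ is the same for $\widetilde{C}$ and $V$ by the same lemma, so the theta factor directly becomes $\theta\begin{bmatrix}\delta'\\ \delta''\end{bmatrix}\!\bigl((2\mu')^{-1}v,\tau\bigr)$. Next I would collect the two quadratic exponents. Using $(\omega')^{-1}D=(\mu')^{-1}$ once more, the exponent coming from $\sigma$ equals $\tfrac{1}{2}{}^tv\,{}^tD\eta'(\mu')^{-1}v$, so combining with ${}^tv\Omega v$ gives
\[
\tfrac{1}{2}{}^tv\bigl[\,2\Omega+{}^tD\eta'(\mu')^{-1}\,\bigr]v.
\]

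Now I would invoke $\kappa'={}^tD\eta'+2\Omega\mu'$ from Lemma \ref{2025.8.12.3} and multiply on the right by $(\mu')^{-1}$ to get $\kappa'(\mu')^{-1}=2\Omega+{}^tD\eta'(\mu')^{-1}$, which is precisely the bracketed matrix above. This identifies the combined quadratic exponent with $\tfrac{1}{2}{}^tv\kappa'(\mu')^{-1}v$, completing the formula for $H(v)$.

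There is essentially no significant obstacle: this is a clean computation assembling (\ref{2025.8.12.1}), (\ref{2023.8.23.1357}), and Lemma \ref{2025.8.12.3}. The only thing to be mindful of is bookkeeping with the quadratic forms, in particular that $\Omega$ is symmetric (since $\mathfrak{n}_{i,j}=\mathfrak{n}_{j,i}$) so that $\exp({}^tv\Omega v)=\exp\bigl(\tfrac{1}{2}{}^tv(2\Omega)v\bigr)$ combines properly with the exponent inherited from $\sigma$.
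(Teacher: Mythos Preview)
Your proposal is correct and follows essentially the same approach as the paper's proof: both substitute (\ref{2023.8.23.1357}) into (\ref{2025.8.12.1}), use $D\mu'=\omega'$ from Lemma \ref{2025.8.12.3} to rewrite $(\omega')^{-1}D=(\mu')^{-1}$ in the theta argument and the quadratic exponent, and then identify the combined exponent via $\kappa'={}^tD\eta'+2\Omega\mu'$.
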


\begin{proof}
From (\ref{2023.8.23.1357}), (\ref{2025.8.12.1}), and Lemma \ref{2025.8.12.3}, we have 
\begin{align*}
H(v)&=\chi\varepsilon\exp\biggl\{\frac{1}{2}{}^tv\Bigl({}^tD\eta'(\omega')^{-1}D+2\Omega\Bigr)v\biggr\}\theta\begin{bmatrix}\delta'\\ \delta'' \end{bmatrix}\bigl((2\omega')^{-1}Dv,\tau\bigr)\\
&=\chi\varepsilon\exp\biggl\{\frac{1}{2}{}^tv\Bigl({}^tD\eta'(\mu')^{-1}+2\Omega\Bigr)v\biggr\}\theta\begin{bmatrix}\delta'\\ \delta'' \end{bmatrix}\bigl((2\mu')^{-1}v,\tau\bigr)\\
&=\chi\varepsilon\exp\biggl\{\frac{1}{2}{}^tv\bigl({}^tD\eta'+2\Omega\mu'\bigr)(\mu')^{-1}v\biggr\}\theta\begin{bmatrix}\delta'\\ \delta'' \end{bmatrix}\bigl((2\mu')^{-1}v,\tau\bigr)\\
&=\chi\varepsilon\exp\biggl(\frac{1}{2}{}^tv\kappa'(\mu')^{-1}v\biggr)\theta\begin{bmatrix}\delta'\\ \delta'' \end{bmatrix}\bigl((2\mu')^{-1}v,\tau\bigr).
\end{align*}
\end{proof}

Let $\mathcal{K}=\begin{pmatrix}\mu'&\mu''\\\kappa'&\kappa''\end{pmatrix}$. 

\begin{prop}
We have 
\[{}^t\mathcal{K}\begin{pmatrix}O&E_g\\-E_g&O\end{pmatrix}\mathcal{K}=-\frac{\pi\textbf{i}}{2}\begin{pmatrix}O&E_g\\-E_g&O\end{pmatrix}.\]
\end{prop}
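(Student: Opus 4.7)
The plan is to expand ${}^t\mathcal{K} J \mathcal{K}$ with $J=\begin{pmatrix}O&E_g\\-E_g&O\end{pmatrix}$ into four $g\times g$ blocks, then substitute the identities
\[
D\mu'=\omega',\quad D\mu''=\omega'',\quad \kappa'={}^tD\eta'+2\Omega\mu',\quad \kappa''={}^tD\eta''+2\Omega\mu''
\]
from Lemma \ref{2025.8.12.3}, and finally invoke Proposition \ref{2025.8.14.1} applied to the canonical basis of $\widetilde{C}$.

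Concretely, I would first observe that the blocks of ${}^t\mathcal{K} J \mathcal{K}$ are precisely the differences ${}^t\mu^{(\alpha)}\kappa^{(\beta)}-{}^t\kappa^{(\alpha)}\mu^{(\beta)}$ for $\alpha,\beta\in\{\prime,\prime\prime\}$. For each such block I would substitute $\kappa^{(\beta)}={}^tD\eta^{(\beta)}+2\Omega\mu^{(\beta)}$. Using the symmetry ${}^t\Omega=\Omega$ (which holds by the definition $\Omega=(\mathfrak{n}_{i,j})$ together with $\mathfrak{n}_{i,j}=\mathfrak{n}_{j,i}$), the two symmetric contributions $2\,{}^t\mu^{(\alpha)}\Omega\mu^{(\beta)}$ appearing with opposite signs cancel. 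Then applying $D\mu^{(\alpha)}=\omega^{(\alpha)}$ rewrites the remaining terms as ${}^t\omega^{(\alpha)}\eta^{(\beta)}-{}^t\eta^{(\alpha)}\omega^{(\beta)}$, so the whole expression collapses to ${}^tK J K$.

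The conclusion then follows directly from Proposition \ref{2025.8.14.1}, since ${}^tK J K=-\frac{\pi\textbf{i}}{2}J$ by that proposition applied to the curve $\widetilde{C}$ with its canonical basis $\{\zeta(\mathfrak{a}_i),\zeta(\mathfrak{b}_i)\}_{i=1}^g$.

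There is no substantial obstacle here; the argument is purely a block-matrix computation. The only point that deserves attention is making sure the transposes line up correctly: after substitution the term ${}^t\kappa^{(\alpha)}\mu^{(\beta)}$ produces ${}^t\eta^{(\alpha)}D\mu^{(\beta)}+2\,{}^t\mu^{(\alpha)}{}^t\Omega\mu^{(\beta)}$, and it is the symmetry of $\Omega$ that lets this second summand match the one coming from ${}^t\mu^{(\alpha)}\kappa^{(\beta)}$ and cancel. Thus the symmetry of $\Omega$ (from Lemma \ref{2025.8.11.540812888765232} together with $\mathfrak{n}_{i,j}=\mathfrak{n}_{j,i}$ noted before Definition \ref{2025.8.23.8654098}) is the key ingredient beyond Lemma \ref{2025.8.12.3} and Proposition \ref{2025.8.14.1}.
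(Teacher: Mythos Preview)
Your proof is correct and follows exactly the paper's approach: the paper's own proof consists of the single sentence ``From Proposition \ref{2025.8.14.1} and Lemma \ref{2025.8.12.3}, we obtain the statement of the proposition,'' and your write-up simply spells out the block computation that this sentence leaves implicit. One small quibble: the symmetry ${}^t\Omega=\Omega$ follows directly from $\mathfrak{n}_{i,j}=\mathfrak{n}_{j,i}$ (as noted where $\Omega$ is introduced just before Definition \ref{2025.8.23.8654098}); Lemma \ref{2025.8.11.540812888765232} concerns weights and rationality, not symmetry, so you can drop that citation.
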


\begin{proof}
From Proposition \ref{2025.8.14.1} and Lemma \ref{2025.8.12.3}, we obtain the statement of the proposition. 
\end{proof}

\section*{Acknowledgments}

The authors are grateful to Atsushi Nakayashiki for drawing our attention to the sigma function associated with a hyperelliptic curve with two points at infinity. 
The work of Takanori Ayano was partly supported by MEXT Promotion of Distinctive Joint Research Center Program JPMXP0723833165 and Osaka Metropolitan University Strategic Research Promotion Project (Development of International Research Hubs).

\end{document}